\let\mathcal=\mathscr
\newtheorem{thm}{Theorem}[section]
\newtheorem{corol}[thm]{Corollary}
\newtheorem{lemma}[thm]{Lemma}
\newtheorem{conj}[thm]{Conjecture}
\newtheorem{defin}[thm]{Definition}
\theoremstyle{remark}
\newtheorem{rem}[thm]{Remark}
\newtheorem{ex}[thm]{Example}
\newcommand{\End}{\mbox{\it End}\,}
\newcommand{\cO}{{\mathcal O}}
\newcommand{\E}{\mathfrak E}
\newcommand{\F}{\mathfrak F}
\newcommand{\gS}{\mathfrak S}
\newcommand{\gQ}{\mathfrak Q}
\newcommand{\gG}{\mathfrak G}
\newcommand{\grass}[2]{\operatorname{Gr}_{#1}(#2)}
\newcommand{\rk}{\operatorname{rk}}
\newcommand{\thegrass}{\operatorname{Gr}_d(E)}
\newcommand{\C}{{\mathbb C}}
\newcommand{\Q}{{\mathbb Q}}
\renewcommand{\P}{{\mathbb P}}
\newcommand{\qee}{\mbox{\hspace{0.2mm}}\hfill$\triangle$}
\begin{document}
\begin{center} \bf   ON A CONJECTURE ABOUT HIGGS BUNDLES  \\[3pt] AND SOME INEQUALITIES \end{center}

\thispagestyle{empty} \vspace{-3mm}
\begin{center}{\sc Ugo Bruzzo,$^{abcd}$  Beatriz Gra\~na Otero$^{e}$} and {\sc Daniel Hern\'andez Ru\'iperez}$^{ef}$ \\[5pt]
\small
$^a$ SISSA (International School for Advanced Studies), Via Bonomea 265, 34136 Trieste, Italy \\
$^b$ Departamento de Matem\'atica, Universidade Federal da Para\'iba,  Jo\~ao Pessoa, PB, Brazil \\
$^c$ INFN (Istituto Nazionale di Fisica Nucleare), Sezione di Trieste \\
$^d$ IGAP (Institute for Geometry and Physics), Trieste  \\
$^e$ Departamento de Matem\'aticas  and IUFFYM (Instituto de F\'\i sica Fundamental y \\ Matem\'aticas), Universidad de Salamanca,
Plaza de la Merced 1-4, 37008 Salamanca, Spain \\
$^f$ Real Academia de Ciencias Exactas, F\'isicas y Naturales, Spain\\[5pt] 
Email: {\tt bruzzo@sissa,it, beagra@usal.es, ruiperez@usal.es}
\end{center}

\vfill

\begin{abstract}  We briefly review an open conjecture about Higgs bundles that
are semistable after pulling back to any curve, and prove it in the rank 2 case. We also prove some results in higher rank under suitable additional assumptions.
Moreover, we establish a set of inequalities holding for H-nef Higgs bundles that generalize some of 
the Fulton-Lazarsfeld inequalities for numerically effective vector bundles.
\end{abstract}

\vfill\begin{minipage}{\textwidth} \small
\parbox{\textwidth}{\hrulefill} \\
Date: 30 April 2023. Revised 31 July 2023 \\
MSC 2020: 14H60, 14J60 \\
Keywords: Higgs bundles, semistability, numerical effectiveness\\
U.B.'s research is partly supported by Bolsa de Produtividade 313333/2020-3 of Brazilian CNPq, by  PRIN ``Birational geometry and moduli problems'' and INdAM-GNSAGA.  B.G.O.~and D.H.R.'s research is partly supported by Grant PID2021-128665NB-I00 funded by MCIN/AEI/ 10.13039/501100011033,  by ``ERDF A way of making Europe,''  and Universidad de Salamanca through Programa XIII.

\end{minipage}

\newpage

\section{Introduction}
The progenitor of the results discussed in this paper may be traced back to a theorem by
Miyaoka   \cite{Mi}, which  characterizes  the semistability of a vector bundle $E$ on a smooth  projective curve $X$
in terms of the nefness of a numerical class in the projectivized bundle $\P E$: if
\begin{equation}\label{lambda}\lambda (E) = c_1(\cO_{\P E}(1)) - \tfrac 1r\pi_1^\ast(c_1(E)) \in N^1(\P E) \otimes \Q,
\end{equation}
where $\pi_1\colon \P E \to X$ is the projection, and $r= \rk E$, then $E$ is semistable if and only if $\lambda(E)$ is nef
(note that $r\lambda(E)$ is the relative anticanonical   class of $\P E$ over $X$).

\subsection{Curve semistable (Higgs) bundles} The following theorem was proved in \cite{Nakayama} and rediscovered in \cite{BHR} in a slightly different and seemingly stronger, albeit equivalent
form. It may be regarded as a higher dimensional generalization of Miyaoka's theorem. Let $X$ be an $n$-dimensional smooth connected
complex projective variety. For any coherent $\cO_X$-module $F$ of positive rank  define its discriminant as
$$\Delta(F) = c_2(F) - \tfrac{\rk F-1}{2\rk F} \, c_1(F)^2 \in H^4(X,\Q).$$
{Moreover, if $E$ is a vector bundle on $X$, the class $\lambda(E)$ is defined as in equation \eqref{lambda}.}

\begin{thm}\label{TeoBO}  Let $E$ be a vector bundle on $X$.  The following conditions are equivalent:
\begin{enumerate} \itemsep=-2pt
   \item $E$ is semistable with respect to some polarization $H$, and $\Delta(E)=0$;
  \item for any morphism $f\colon C\to X$, where $C$ is a smooth  projective curve, the vector bundle $f^* E$ is semistable;
  \item the class $\lambda(E)$ is nef.
\end{enumerate}
\end{thm}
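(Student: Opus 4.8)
The plan is to prove the cycle by first isolating the purely geometric equivalence (ii)$\Leftrightarrow$(iii), which needs only Miyaoka's theorem on curves, and then to connect these to the algebraic condition (i) through the Kobayashi--Hitchin correspondence. Throughout I write $r=\rk E$ and $\xi=c_1(\cO_{\P E}(1))$, so that $\lambda(E)=\xi-\tfrac1r\pi_1^\ast c_1(E)$, and I use the naturality of this class: for $f\colon C\to X$ the induced map $\tilde f\colon\P(f^\ast E)\to\P E$ satisfies $\cO_{\P(f^\ast E)}(1)=\tilde f^\ast\cO_{\P E}(1)$, hence $\lambda(f^\ast E)=\tilde f^\ast\lambda(E)$.

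For (iii)$\Rightarrow$(ii), given $f\colon C\to X$, pulling a nef class back along the morphism $\tilde f$ keeps it nef, so $\lambda(f^\ast E)=\tilde f^\ast\lambda(E)$ is nef on $\P(f^\ast E)$, and Miyaoka's theorem on $C$ gives that $f^\ast E$ is semistable. For the converse (ii)$\Rightarrow$(iii) I would verify $\lambda(E)\cdot Z\ge 0$ for every irreducible curve $Z\subset\P E$ by projecting to $X$. If $\pi_1(Z)$ is a point, then $Z$ lies in a fibre $\P^{r-1}$ on which $\lambda(E)$ restricts to the ample class $\xi$, giving positivity. Otherwise let $\nu\colon C\to Z$ be the normalization and $f=\pi_1\circ\nu$; the lift $C\to\P E$ corresponds to a line-bundle quotient $f^\ast E\twoheadrightarrow L$ with $\nu^\ast\xi=c_1(L)$, so $\lambda(E)\cdot Z=\deg L-\tfrac1r\deg f^\ast E\ge 0$ precisely because semistability of $f^\ast E$ forces every quotient line bundle to have slope at least $\mu(f^\ast E)$. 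Hence $\lambda(E)$ is nef, settling (ii)$\Leftrightarrow$(iii).

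To bring in (i) I would use the pushforward identity $\pi_{1\ast}(\lambda(E)^{r+1})=-\Delta(E)$, a direct consequence of the Grothendieck relation together with $\pi_{1\ast}(\xi^{r-1})=1$, $\pi_{1\ast}(\xi^{r})=c_1(E)$, $\pi_{1\ast}(\xi^{r+1})=c_1(E)^2-c_2(E)$. For (iii)$\Rightarrow$(i): since $\lambda(E)$ and $\pi_1^\ast H$ are nef, $\lambda(E)^{r+1}\cdot\pi_1^\ast H^{n-2}=-\Delta(E)\cdot H^{n-2}\ge 0$, so $\Delta(E)\cdot H^{n-2}\le 0$; on the other hand (ii) holds, and restricting to a general high-degree complete-intersection curve shows $E$ is $H$-semistable (a destabilizing subsheaf would survive on the general curve), whence Bogomolov's inequality gives $\Delta(E)\cdot H^{n-2}\ge 0$, forcing $\Delta(E)\cdot H^{n-2}=0$. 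To upgrade this numerical equality to the vanishing of the class, I would pass to the Jordan--H\"older graded bundle $\gr E=\bigoplus_i E_i$ (the $E_i$ stable of the common slope) and invoke the additivity formula, which writes $\Delta(E)\cdot H^{n-2}$ as a sum of the nonnegative terms $\Delta(E_i)\cdot H^{n-2}$ and terms of the form $-\tfrac{r_ir_j}{2r}\eta_{ij}^2\cdot H^{n-2}$, each nonnegative by the Hodge index theorem since $\eta_{ij}=c_1(E_i)/r_i-c_1(E_j)/r_j$ is $H$-primitive. The pinching forces every $\eta_{ij}=0$ and every $\Delta(E_i)\cdot H^{n-2}=0$; the Kobayashi--Hitchin correspondence then equips each stable $E_i$ with a Hermitian--Einstein metric, and the L\"ubke--Kobayashi equality case makes $E_i$ projectively flat, so $\Delta(E_i)=0$ and, the correction terms having vanished, $\Delta(E)=\sum_i\Delta(E_i)=0$.

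For the reverse implication (i)$\Rightarrow$(ii) I would run the same structural input in the opposite direction. Passing again to $\gr E=\bigoplus_i E_i$ and arguing as above, the hypothesis $\Delta(E)=0$ forces each stable summand $E_i$ to be projectively flat with $c_1(E_i)/r_i$ equal to a fixed class $h$. A projectively flat bundle pulls back under any $f\colon C\to X$ to a projectively flat, hence polystable, bundle on $C$, and all the $f^\ast E_i$ acquire the common slope $\deg f^\ast h$; therefore $f^\ast\gr E$ is polystable, and since $f^\ast E$ carries the pulled-back filtration whose quotients are these same-slope semistable bundles, it is itself semistable. This is exactly (ii), already shown equivalent to (iii). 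I expect the main obstacle to be precisely this use of the Kobayashi--Hitchin correspondence: the elementary positivity arguments only yield the numerical statement $\Delta(E)\cdot H^{n-2}=0$, and promoting it to the genuine vanishing $\Delta(E)=0$ in $H^4(X,\Q)$---equivalently, identifying the semistable bundles with $\Delta=0$ as exactly the projectively flat ones---seems to require the existence theorem for Hermitian--Einstein metrics together with the equality case of the L\"ubke--Kobayashi inequality, the one genuinely transcendental ingredient in the argument.
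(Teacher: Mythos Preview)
Your proof is correct. Note, however, that the paper does not give its own proof of this theorem: it is quoted as a known result from \cite{Nakayama} and \cite{BHR}, with the parenthetical remark that the upgrade from Nakayama's original condition $\Delta(E)\cdot H^{n-2}=0$ to $\Delta(E)=0$ follows from Theorem~2 of \cite{simpson-local}. Your argument reconstructs exactly this structure. The equivalence (ii)$\Leftrightarrow$(iii) via Miyaoka on curves and testing nefness curve by curve is the standard one from \cite{BHR}. Your pushforward identity $\pi_{1\ast}(\lambda(E)^{r+1})=-\Delta(E)$, combined with nefness and Bogomolov's inequality to pin down $\Delta(E)\cdot H^{n-2}=0$, is Nakayama's argument; it is also the $d=1$, ordinary-bundle precursor of the Segre-class computation the paper carries out later in equation~\eqref{delta} for the Higgs setting. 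Finally, your Jordan--H\"older plus Kobayashi--Hitchin step, invoking the L\"ubke--Kobayashi equality case to obtain projective flatness of the stable graded pieces, is precisely the content the paper packages as ``Theorem~2 in \cite{simpson-local}'': that theorem asserts that a semistable bundle with vanishing $c_1\cdot H^{n-1}$ and $\operatorname{ch}_2\cdot H^{n-2}$ admits a filtration by stable bundles with zero Chern classes, which (applied to $\End E$) is exactly the upgrade you perform by hand. So your proof and the sources the paper cites take essentially the same route.
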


\noindent(In Nakayama the condition on the discriminant was $\Delta(E)\cdot H^{n-2} = 0 $, but via Theorem 2 in \cite{simpson-local} this
is readily shown to be equivalent to $\Delta(E)=0$ whenever $E$ is semistable with respect to $H$.) We shall call {\em curve semistable} the vector bundles
satisfying condition (ii). It may be natural to wonder if Theorem \ref{TeoBO} also holds true for Higgs bundles. We recall that a Higgs sheaf
is a pair $\F=(F,\phi)$, where $F$ is a coherent $\cO_X$-module, and $\phi\colon F \to F\otimes\Omega^1_X$ is an 
$\cO_X$-linear morphism such that the composition 
$$\phi\wedge\phi \colon F \xrightarrow{\phi} F\otimes\Omega^1_X  \xrightarrow{\phi\times\mathrm{id}}   F\otimes \Omega^1_X\otimes\Omega^1_X \to F\otimes\Omega^2_X$$
is zero. A Higgs bundle is a Higgs sheaf with $F$ locally free. Semistability and stability are defined as for vector bundles but only with reference to $\phi$-invariant subsheaves.
Curve semistability is defined as for vector bundles. So the Higgs bundle version of Theorem \ref{TeoBO} {is the following conjecture:}

\begin{conj}\label{TeoHiggs}  Let $\E=(E,\phi)$ be a Higgs bundle on $X$.  The following conditions are equivalent:
\begin{enumerate} \itemsep=-2pt
   \item $\E$ is semistable with respect to some polarization $H$, and $\Delta(E)=0$;
  \item for any morphism $f\colon C\to X$, where $C$ is a smooth   projective curve, the Higgs bundle $f^* \E$ is semistable.
\end{enumerate}
\end{conj}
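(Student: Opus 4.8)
The plan is to prove the two implications separately: $(i)\Rightarrow(ii)$ is the more tractable direction and should follow from the structure theory of semistable Higgs bundles with vanishing discriminant, whereas $(ii)\Rightarrow(i)$ is where the real difficulty lies, and where the Higgs field obstructs a direct transcription of the argument that proves Theorem \ref{TeoBO}.

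For $(i)\Rightarrow(ii)$ I would argue as follows. If $\E=(E,\phi)$ is $H$-semistable with $\Delta(E)=0$, then its Jordan--Hölder graded is a direct sum of stable Higgs bundles of the same slope, each again with vanishing discriminant. By the Higgs version of the Kobayashi--Hitchin correspondence (Simpson), each such stable piece carries a Hermitian--Yang--Mills metric whose curvature is central, i.e.\ the $\Q$-twist by $\tfrac1r c_1$, with $r=\rk E$, is projectively Hermitian-flat. The point is that this projective flatness is preserved under an arbitrary pullback $f\colon C\to X$, and on a curve a projectively flat Higgs bundle is automatically semistable; since extensions of semistable Higgs bundles of equal slope remain semistable, $f^*\E$ is semistable for every $f$. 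This realizes the Higgs analogue of the implication $(i)\Rightarrow(ii)$ of Theorem \ref{TeoBO}.

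For $(ii)\Rightarrow(i)$ the argument splits into establishing $H$-semistability and then the vanishing of $\Delta(E)$. The semistability of $\E$ on $X$ is the easier half: restricting to a general complete intersection curve $C$ of large degree and invoking the Mehta--Ramanathan theorem for Higgs bundles, semistability of $\E|_C$ forces $\E$ itself to be $H$-semistable, and $\E|_C$ is semistable by hypothesis $(ii)$. The crux is $\Delta(E)=0$. Once $H$-semistability is known, Bogomolov's inequality for semistable Higgs bundles yields $\Delta(E)\cdot H^{n-2}\ge 0$, and by the remark following Theorem \ref{TeoBO} it suffices to prove the reverse inequality $\Delta(E)\cdot H^{n-2}\le 0$. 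In the vector bundle case this is exactly what curve semistability delivers: a curve in $\P E$ projects to a map $f\colon C\to X$ together with a sub-line bundle of $f^*E$, the degree of $\lambda(E)$ along that curve measures the slope of this sub-line bundle against $\mu(f^*E)$, and semistability of $f^*E$ makes this degree nonnegative; thus $\lambda(E)$ is nef and the equivalence of Theorem \ref{TeoBO} forces $\Delta(E)=0$.

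The main obstacle is precisely that this last step does not survive in the Higgs category. A sub-line bundle of $f^*E$ corresponding to a curve in $\P E$ need not be $\phi$-invariant, so the semistability of the Higgs bundle $f^*\E$ --- which only constrains $\phi$-invariant subsheaves --- does not bound its degree, and $\lambda(E)$ need not be nef. The remedy I would pursue is to replace the naive projective-bundle criterion by the theory of H-nef Higgs bundles and the Fulton--Lazarsfeld-type inequalities it satisfies, using the Higgs field to estimate the degree of a non-invariant sub-line bundle through the induced map $L\to (f^*E/L)\otimes\Omega^1_C$. In rank $2$ this estimate can be carried out completely, because the quotient is again a line bundle and the two cases ($L$ invariant, or $\phi$ genuinely mixing $L$ with the quotient) exhaust all possibilities; this is what makes the conjecture provable there. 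In rank $\ge 3$ the action of the Higgs field on a flag of non-invariant subsheaves is not controlled by semistability alone, and closing the gap between the Bogomolov inequality and its reverse is exactly the point at which the general conjecture remains open.
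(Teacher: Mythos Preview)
Your framing is accurate: the statement is a conjecture in the paper, and you correctly flag that only the rank $2$ case is actually settled here, with $(i)\Rightarrow(ii)$ already in \cite{BHR}. Your sketch of $(i)\Rightarrow(ii)$ via Simpson's correspondence is sound and is essentially the argument behind that reference. You also identify the real obstacle in $(ii)\Rightarrow(i)$ correctly: a curve in $\P E$ gives a sub-line bundle that need not be $\phi$-invariant, so Higgs semistability of the pullback says nothing about it, and $\lambda(E)$ need not be nef.

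Where your proposal falls short is the rank $2$ remedy. The degree estimate you suggest, coming from a nonzero map $L\to (f^\ast E/L)\otimes\Omega^1_C$, only yields $\deg L \le \mu(f^\ast E) + (g(C)-1)$, not $\deg L \le \mu(f^\ast E)$; the $\Omega^1_C$ contribution does not go away, and since the genus of $C$ is unbounded you cannot conclude that $\lambda(E)$ is nef this way. The dichotomy ``$L$ invariant vs.\ $\phi$ mixing $L$ with the quotient'' is real, but the second branch does not close by a pointwise degree count on curves.

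The paper's route is genuinely different and avoids this issue by working globally on $\P E$ rather than curve by curve. One first proves (Theorem \ref{nonempty} and Corollary \ref{empty}) that for a rank $2$ Higgs bundle the Higgs Grassmannian $\grass{1}{\E}\subset\P E$ always has an irreducible component $Z$ that is a \emph{divisor} in $\P E$ and surjects onto $X$; the key input is that over any smooth curve $\grass{1}{\E}$ is nonempty, shown by a splitting argument for the relative cotangent sequence of $\P E$. Curve semistability makes the restricted class $\theta_1(\E)=\lambda(E)_{\vert\grass{1}{\E}}$ nef on $Z$ (this is the content of \cite{BHR}), and then a Segre-class push-forward computation (equations \eqref{f1}--\eqref{delta}) gives
\[
0 \le \int_Z \theta_1(\E)^m \;=\; -\,(\text{positive constant})\cdot \beta_0\cdot \Delta(E),
\]
with $\beta_0>0$ because $Z$ surjects onto $X$. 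Hence $\Delta(E)\le 0$, and Bogomolov finishes. So the decisive idea you are missing is not a sharper curve-by-curve estimate but the structural fact that in rank $2$ the Higgs Grassmannian supplies a divisorial cycle in $\P E$ on which $\lambda(E)$ \emph{is} nef, and that integrating the top power of $\theta_1$ over that cycle computes $\Delta(E)$ with the right sign.
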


\noindent (We shall state the condition generalizing the nefness of the class $\lambda(E)$ later on.) {The fact that condition (i) implies condition (ii) was proved in \cite{BHR}. A motivation for expecting that the opposite implication} may hold true is Bogomolov inequality \cite{HL}:
if $E$ is a vector bundle on an $n$-dimensional smooth   projective variety, semistable with respect
to a polarization $H$, 
then $\Delta(E)\cdot H^{n-2} \ge 0$.
The underlying vector bundle $E$ of a semistable Higgs bundle $\E=(E,\phi)$ satisfies the same inequality, even when
$E$ itself is not semistable \cite{Simpson-unif}; i.e., semistability is a sufficient but non-necessary condition for the non-negativity of the quantity $ \Delta(E)\cdot H^{n-2} $, and one can imagine the same happens for the vanishing of $\Delta(E)$ for curve semistable   bundles.

We conjecture
that the reverse implication holds true for any smooth   projective variety. In this paper we prove this 
when $\E$ has rank two; actually, we prove the implication {in any rank} when the Grassmannian of Higgs quotients of some rank (to be defined later) has a component which is a divisor in the full Grassmannian and surjects onto $X$. Then we prove that such a component always exists in rank two.

\subsection{Higgs varieties} 
One easily shows that a curve semistable Higgs bundle is semistable with respect to any polarization. So the nontrivial content of the conjecture is the following {statement}:

\begin{center}\em
A curve semistable Higgs bundle has vanishing discriminant.
\end{center}
{Here curve semistability for Higgs bundles is defined as in condition (ii) of Conjecture \ref{TeoHiggs}.} Waiting for the conjecture to be eventually settled in the positive or negative, it makes sense to
prove it for specific classes of varieties.  
The authors of  \cite{BruLoGiu} defined a 
  \emph{Higgs variety} $X$ as one on which the conjecture holds.  The easiest case is that
  of varieties with slope-semistable cotangent bundle of nonnegative degree, simply
  because in this situation the underlying vector bundle $E$ of a curve semistable Higgs bundle 
  is itself curve semistable. Starting from this one can identify other Higgs varieties, such as:

\begin{itemize} \itemsep=-2pt
\item rationally connected varieties;
\item abelian varieties; 
\item fibrations over a Higgs variety whose fibers are rationally connected;
\item bases of  finite \'etale covers whose total space is a Higgs variety;
\item varieties of dimension $\ge 3$ containing an effective ample divisor which is a Higgs variety;
\item varieties with nef tangent bundle (in dimension 2 and 3 these were classified in \cite{DPS});
\item varieties birational to a Higgs variety.
\end{itemize}

Moreover, 
 in \cite{BLLG} it was shown that   algebraic K3 surfaces
 are Higgs varieties, and this was extended, using different techniques, to simply connected
 Calabi-Yau varieties in \cite{BruCap}. Some results in the case of elliptic surfaces are proved
 in \cite{BruPer1}. A review of this problem updated to 2017 can be found in \cite{BRcong}.

\subsection{Contents} 
The main tool we use in this paper is the {\em Higgs Grassmannian} of a Higgs bundle $\E=(E, \phi)$, a notion that some of us introduced in \cite{BHR}.
This object is defined in Section \ref{HiggsGrass}, where some of its basic properties are studied. It seems quite difficult
to find general results about the Higgs Grassmannian, but its structure is quite clear in the case $\rk E=2$, and this is indeed the
key to the proof of the conjecture in the rank 2 case that we give in Section \ref{conjrk2}. Actually in Section \ref{giapponesi} we prove the conjecture assuming that the rank $d$ Higgs Grassmannian $\grass{d}\E$ has a component that is a divisor in the full Grassmann bundle $\thegrass$ which surjects onto $X$. Such a divisor always exists in the rank 2 case, due to the fact that the Higgs Grassmiannian of a rank 2 Higgs bundle over a curve is never empty, thus providing a full proof of the conjecture in the rank 2 case.

 The Higgs Grassmannian allows one to introduce a notion of numerical effectiveness for Higgs bundles, a notion that ``feels'' the Higgs field.
 This was studied in \cite{bruzzo-grana-adv, bruzzo-grana-crelle}. In the final Section \ref{inequalities} of this paper we show that
 Higgs bundles that are numerically effective in this sense satisfy some inequalities which generalize some of the 
 Fulton-Lazarsfeld inequalities for numerically effective vector bundles  (\cite{FulLaz}, see also \cite{DPS}).

\paragraph{Notation and conventions.} All varieties and schemes are over the complex numbers, and, unless otherwise stated, all varieties are supposed to be connected.
A ``sheaf'' on a scheme $X$ will be a coherent $\cO_X$-module.

\section{The Higgs Grassmannian} \label{HiggsGrass} The {\em Higgs Grassmannian} is an object that parameterizes locally free Higgs quotients of a Higgs bundle
 exactly as the usual Grassmann bundle parameterizes locally free quotients of a vector bundle. This was
 introduced in \cite{BHR}. We recall here its definition and some of its properties.
 
 \subsection{Definition of the Higgs Grassmannian}
 Let $X$ be a smooth  variety over $\C$.
 For a given   rank $r$ vector bundle $E$ on  $X$, and for every $d$ in the range $ 0 < d < r$, 
 we denote the Grassmann bundle of rank $d$ locally free quotients    of $E$ as  $\grass dE$.  
 Since $\grass1E=\P E$ we shall use the latter notation. One has the universal exact sequence  
$$ 0 \to S_d  \to \pi_d^\ast E \to Q_d \to 0 $$ of vector bundles
on $\grass dE$, where $Q_d$ is the rank $d$ universal quotient bundle,   $S_d$ is the corresponding kernel, and   $\pi_d\colon\grass dE \to X$ is the projection.
If $\E=(E,\phi)$ is a Higgs bundle, we form the diagram
\begin{equation}\label{diagram}\xymatrix{0\ar[r]&S_d  \ar[r]^{{a}_d}& \pi^\ast_d   E \ar[r]\ar[d]^{\pi^\ast_d \phi}& Q _d  \ar[r]&0\\
0\ar[r]&S_d  \otimes \Omega^1_{\grass dE}\ar[r]&\pi^\ast_d   E\otimes \Omega^1_{\grass dE}\ar[r]^{{b}_d}& Q _d  \otimes \Omega^1_{\grass dE}\ar[r]&0}
\end{equation} 
The $d$-th Higgs Grassmannian of $\E$, denoted $\grass d{\E}$, is the subscheme of $\grass dE$ defined by the zero locus of the composition 
${b}_d\circ\pi_d^\ast\phi\circ {a}_d$. By construction, the restrictions of the bundles $S_d$ and $Q_d$ to   $\grass d{\E}$ carry Higgs fields
induced by $\pi_d^\ast\phi$, so that we have an exact sequence of Higgs bundles on $\grass d{\E}$ 
$$ 0 \to \gS_d \to \rho_d^\ast \E \to \gQ_d \to 0 .$$
The scheme  $\grass d{\E}$ may be singular, reducible, nonreduced, non-equidimensional. On the positive side it enjoys
the analogous universal property of the usual Grassmann bundles:
if $f\colon Y \to X$ is a scheme morphism, and $\gG$ is a rank $d$  locally free Higgs quotient of
$f^\ast\E$, there is a morphism $g\colon Y \to \grass d{\E}$ such that
$\gG=g^\ast \gQ_d$, and the diagram
$$\xymatrix{
& \grass d{\E} \ar[d]^{\rho_d} \\
Y\ar[ur]^g\ar[r]_f & X}
$$
commutes.

\label{nefthetas} Now assume that $X$ is projective.
Given a rank $r$ Higgs bundle $\E=(E,\phi)$ on $X$, for every $ 0 < d < r$ we define the following classes
in $N^1(\grass d{\E})\otimes\Q$
\begin{equation}\label{thetas} \theta_d(\E) = c_1(Q_d) -  \tfrac dr \rho_d^\ast (c_1(E)).\end{equation}
It was proved in \cite{BHR} (see also  \cite{bruzzo-grana-adv}) that $\E$ is curve semistable if and only if {\em all}
classes $\theta_d(\E)$ are nef. Note that $\theta_1(\E)$ is the restriction of the class $\lambda(E) \in N^1(\P E)\otimes \Q$
to $\grass 1{\E}$. Here one can note a different behavior of Higgs bundles as opposed to vector bundles:
while in the latter case the condition that the class $\lambda(E)$ is nef is equivalent to curve semistability, 
in the Higgs case one needs the nefness of {\em all} classes $ \theta_d(\E)$; see \cite{BHR}
for an example of a rank 3 Higgs bundle on a curve with  $\theta_1(\E)$ nef,  $\theta_2(\E)$ not nef,
which is not semistable.

\subsection{Higgs numerical effectiveness}
In \cite{bruzzo-grana-adv} by means of the Higgs Grassmannians a notion of numerical effectiveness 
for Higgs bundles was introduced. It is a definition based on recursion on the rank of the
successive universal quotient bundles. Since we are going to use this definition later on,
we recall it here. 

\begin{defin}  \label{moddef} A Higgs bundle $\E=(E, \phi)$ of rank one on a smooth  projective variety is said
to be Higgs-numerically effective (for short, H-nef) if the underyling vector bundle $E$ is
numerically effective in the usual sense. If $\rk E \geq 2$ we
require that:
\begin{enumerate}\itemsep=-2pt \item all bundles $\gQ_{k}$ are Higgs numerically effective;
\item the line bundle $\det(E)$ is nef.
\end{enumerate}
If both $\E$ and $\E^\ast$ are Higgs-numerically effective,
$\E$ is said to be Higgs-numerically flat (H-nflat).
\end{defin}

\section{The conjecture in any rank }\label{giapponesi}
\subsection{A push-forward formula} We recall from \cite{Kaji-Tera} a push-forward formula for the Segre classes of the universal quotient bundle over Grassmann bundles  $\pi_d\colon\thegrass\to X$. Here $X$ will be a smooth projective variety of dimension $n$ and $E$ a rank $r>1$ vector bundle.  
Greek letters such as $\lambda$, $\mu$ will denote a {\em partition}, i.e., a {finite} nonincreasing sequence of natural numbers.
We let
$$ \vert \lambda \vert = \sum_{i=1}^q \lambda_i ,$$where $\lambda= (\lambda_1, \dots,\lambda_q)$, while $\pi_\ast$ will denote the push-forward of Chow groups $$ \pi_\ast \colon A^k(\thegrass) \to A^{k - d(r-d)}(X).$$
Moreover, we define the Segre classes of the vector bundle $F$ on a variety $X$ by the formula
$$  s(F)= \sum_{i=0}^{\dim X} (-1)^i s_i(F) = \frac1{c(F)}$$where $c(F)$ is the total Chern class of $F$ (we follow the normalization of \cite{Ful98}, hence the minus signs).
\begin{lemma}{\rm \cite{JLP,Kaji-Tera}}
Let $Q$ be the rank $d$ universal quotient bundle of a rank $r$ vector bundle $E$ over $X$. The following push-forward formula holds:
$$\pi_\ast \Delta_\mu(s(Q))=\Delta_{\mu-\varepsilon}(s(E)).$$
\end{lemma}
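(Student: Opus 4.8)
\emph{Plan.} The formula is the classical Gysin (integration-along-the-fibre) formula for Grassmann bundles, and the natural strategy is to translate it into a single identity of symmetric functions and then integrate fibrewise. First I would invoke the splitting principle, working after a flag-bundle pullback on which $E$ acquires Chern roots $y_1,\dots,y_r$; over $\grass{d}{E}$ the universal quotient $Q$ then carries Chern roots $x_1,\dots,x_d$ (a $d$-element sub-multiset of the $y$'s in the split model). The paper's normalization $s(F)=1/c(F)$ means precisely that the Segre classes are the complete homogeneous symmetric functions of the Chern roots, $s_i(Q)=h_i(x_1,\dots,x_d)$ and $s_i(E)=h_i(y_1,\dots,y_r)$. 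Hence, by the Jacobi--Trudi identity $s_\lambda=\det(h_{\lambda_i-i+j})$, the Schur determinants in the statement are honest Schur polynomials in the roots: $\Delta_\mu(s(Q))=s_\mu(x_1,\dots,x_d)$ and $\Delta_{\mu-\varepsilon}(s(E))=s_{\mu-\varepsilon}(y_1,\dots,y_r)$, where $\varepsilon$ is the rectangular partition with $d$ rows all equal to $r-d$ (so that $|\varepsilon|=d(r-d)$ matches the relative dimension $\dim\grass{d}{E}-\dim X$, forcing the correct codimension drop). The lemma thereby reduces to the clean identity $\pi_\ast\, s_\mu(x_1,\dots,x_d)=s_{\mu-\varepsilon}(y_1,\dots,y_r)$.

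Next I would assemble the fibrewise integration from its building block, the projective-bundle case: for $p\colon\P E\to X$ with $\zeta=c_1(\cO(1))$ one has $p_\ast(\zeta^{\,k})=s_{k-r+1}(E)$, which in the split model is the Lagrange-interpolation identity $\sum_{i}y_i^{\,k}\big/\prod_{j\neq i}(y_i-y_j)=h_{k-r+1}(y)$. Using the localization description of $\pi_\ast$, whose fibre fixed points are the coordinate $d$-planes indexed by the $d$-subsets $I\subseteq\{1,\dots,r\}$ (at which $Q$ restricts to $\bigoplus_{i\in I}L_i$ and the fibre tangent weights are the $y_i-y_j$ with $i\in I$, $j\notin I$), one obtains for any symmetric $f$
$$\pi_\ast\,f(x_1,\dots,x_d)=\sum_{|I|=d}\frac{f(y_I)}{\prod_{i\in I,\,j\notin I}(y_i-y_j)},\qquad y_I=\{y_i:i\in I\}.$$
Specializing to $f=s_\mu$ and inserting the bialternant form, for $I=\{i_1<\dots<i_d\}$,
$$s_\mu(y_I)=\frac{\det\!\big(y_{i_a}^{\,\mu_b+d-b}\big)_{1\le a,b\le d}}{\prod_{1\le a<b\le d}(y_{i_a}-y_{i_b})},$$
turns the right-hand side into a sum over $I$ of ratios of alternants.

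The crux, and the step I expect to be the main obstacle, is the purely symmetric-function identity
$$\sum_{|I|=d}\frac{s_\mu(y_I)}{\prod_{i\in I,\,j\notin I}(y_i-y_j)}=s_{\mu-\varepsilon}(y_1,\dots,y_r).$$
I would prove it by the bialternant method: expand $s_\mu(y_I)$ as above, absorb the denominator $\prod_{i\in I,j\notin I}(y_i-y_j)$ into the two Vandermonde-type products, and recognize the resulting sum over all $d$-subsets as a single alternant in the full set of $r$ variables via a Sylvester/Jacobi determinant manipulation; dividing by the total Vandermonde $\prod_{k<l}(y_k-y_l)$ then yields exactly $s_{\mu-\varepsilon}(y)$, the uniform shift by $r-d$ in the exponents encoding the subtraction of the rectangle $\varepsilon$. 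Equivalently, and avoiding localization altogether, one can reach the same conclusion directly: writing $s_\mu(x)$ in bialternant form and pushing the antisymmetrized monomials forward row by row with the single-variable formula reassembles, thanks to the Vandermonde weight, into the Jacobi--Trudi determinant $\det\!\big(h_{\mu_i-(r-d)-i+j}(y)\big)_{1\le i,j\le d}=\Delta_{\mu-\varepsilon}(s(E))$.

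Finally I would discharge the bookkeeping the reduction hides: justifying the splitting principle (the asserted equality lives in $A^\ast(X)$, so it suffices to check it after a flag-bundle pullback, which is injective on Chow groups); tracking the signs dictated by the convention $s(F)=\sum_i(-1)^is_i(F)=1/c(F)$ through the single-variable formula; and handling the degenerate cases in which $\mu-\varepsilon$ is not a genuine partition, where the right-hand Jacobi--Trudi determinant either vanishes or collapses to a signed Schur class, in agreement with the vanishing of $\pi_\ast$ on classes of degree below the relative dimension $d(r-d)$.
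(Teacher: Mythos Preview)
The paper does not prove this lemma at all; it is quoted from the references \cite{JLP,Kaji-Tera} and used as a black box, so there is no in-house argument to compare your proposal against. Your approach---reduce via the splitting principle and Jacobi--Trudi to the Gysin identity $\pi_\ast s_\mu(x)=s_{\mu-\varepsilon}(y)$ for Schur polynomials in Chern roots, then establish that identity either by equivariant localization at the coordinate $d$-planes or by the equivalent Sylvester/bialternant manipulation---is a correct and standard derivation, essentially the line taken in the cited sources (especially \cite{JLP}). The sign bookkeeping you flag is right: with the paper's convention $\sum_i(-1)^is_i(F)=1/c(F)$ one has $s_i(F)=h_i(\text{Chern roots})$, so the Jacobi--Trudi determinants really are Schur polynomials. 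One small caveat: the parenthetical ``a $d$-element sub-multiset of the $y$'s in the split model'' is only literally true at the torus-fixed loci; globally the Chern roots of $Q$ are new classes $x_1,\dots,x_d$ on (the flag bundle over) $\grass{d}{E}$, not a subset of the $y_j$. Since you immediately pass to localization, where this description is exact, the slip is harmless, but it would be cleaner to introduce the $x_a$ as independent roots and let the localization formula do the matching.
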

Here
\begin{enumerate}\itemsep=-2pt \item $\varepsilon$ is the partition of length $d$ whose elements are all $r-d$;
\item for every $c\in A^\bullet(X)$, $\Delta_\lambda(c)$ is the Schur polynomial  associated with $\lambda$ computed on the components 
of $c$ in $ A^\bullet(X)$, that is,
$$ \Delta_\lambda(c) = \det [ c_{\lambda_i+j-i}]_{1\le i,j \le n}.$$
\end{enumerate}
\begin{corol} \label{chi}Let $\chi = c_1(Q)$. For $N$ in the range $d(r-d) \le N \le d(r-d)+ n  $ one has
\begin{equation}\label{japanese} \pi_\ast \chi^N = \sum_{\vert\lambda\vert = N - d(r-d)} f^{\lambda+\varepsilon}\, \Delta_\lambda(s(E)) \end{equation}
where $f^\lambda$ is the number of standard Young tableaux of shape $\lambda$.\,\footnote{A standard Young tableau \cite{SYT} is a Young tableau
(say with $b$ boxes) whose boxes are labelled with the integers from 1 to $b$, in such a way that all rows and columns contain
increasing sequences of integers. To ``have shape $\lambda=(\lambda_1,\dots,\lambda_q)$'' means that the $i$-th row has $\lambda_i$ boxes.
Then $f^\lambda$ is the number of ways a Young tableau of shape $\lambda$ can be labelled.}

\end{corol}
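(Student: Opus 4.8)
The plan is to reduce the statement to the push-forward formula of the Lemma together with one classical identity in the ring of symmetric functions. In the normalization used here the Segre classes $s_i(Q)$ coincide with the complete homogeneous symmetric functions $h_i$ of the Chern roots $x_1,\dots,x_d$ of $Q$: the degree-$i$ part of $c(Q)^{-1}=\prod_i(1+x_i)^{-1}$ equals $(-1)^i h_i(x)$, so $s_i(Q)=h_i(x)$; in particular the determinant defining $\Delta_\lambda(s(Q))$ is the Jacobi--Trudi expression for the Schur polynomial $s_\lambda(x_1,\dots,x_d)$, and $\chi=c_1(Q)=s_1(Q)=h_1(x)$. First I would expand the power $\chi^N$ into Schur classes using the Frobenius identity $p_1^{\,N}=\sum_{|\lambda|=N} f^{\lambda}\, s_\lambda$ (the value at the identity class of $p_\rho=\sum_\lambda\chi^\lambda_\rho s_\lambda$, where $\chi^\lambda_{(1^N)}=f^\lambda$ is the dimension of the irreducible $S_N$-module, i.e.\ the number of standard Young tableaux of shape $\lambda$). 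Since $h_1=p_1$ and $s_\lambda$ vanishes in $d$ variables once $\ell(\lambda)>d$, evaluating at the Chern roots of $Q$ gives
$$\chi^{\,N}=\sum_{|\lambda|=N,\ \ell(\lambda)\le d} f^{\lambda}\,\Delta_\lambda(s(Q)).$$

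Next I would apply $\pi_\ast$ termwise and invoke the Lemma, which gives $\pi_\ast\Delta_\lambda(s(Q))=\Delta_{\lambda-\varepsilon}(s(E))$. The decisive point is that this vanishes unless $\lambda$ contains the rectangle $\varepsilon=((r-d)^d)$: if $\lambda_d<r-d$ then $\lambda-\varepsilon$ is not a partition and the determinant $\det[\,s_{(\lambda-\varepsilon)_i+j-i}(E)\,]$ is zero. Together with the bound $\ell(\lambda)\le d$ already in force, the surviving indices are exactly those with $\ell(\lambda)=d$ and $\lambda_i\ge r-d$ for all $i$; each is written uniquely as $\lambda=\mu+\varepsilon$ with $\mu$ a partition of $N-d(r-d)$ having $\ell(\mu)\le d$. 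Using $\Delta_{\lambda-\varepsilon}(s(E))=\Delta_\mu(s(E))$ and $f^\lambda=f^{\mu+\varepsilon}$, the sum reindexes to
$$\pi_\ast\chi^{\,N}=\sum_{|\mu|=N-d(r-d)} f^{\mu+\varepsilon}\,\Delta_\mu(s(E)),$$
which is the asserted identity after renaming $\mu$. The hypothesis $d(r-d)\le N\le d(r-d)+n$ guarantees $0\le N-d(r-d)\le n$, so the classes $\Delta_\mu(s(E))$ genuinely live in $A^{N-d(r-d)}(X)$.

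The hard part will be the bookkeeping in this reindexing: one must verify that the determinantal convention for $\Delta_{\lambda-\varepsilon}(s(E))$ forces vanishing precisely when $\lambda\not\supseteq\varepsilon$ --- so that no partition $\mu$ with $\ell(\mu)>d$ slips into the final sum --- and that the tableaux count is invariant under $\lambda\mapsto\lambda+\varepsilon$. Everything else is a formal dictionary between Segre/Chern calculus and symmetric functions, so it is this vanishing-and-matching step that carries the actual content.
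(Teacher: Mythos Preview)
The paper does not supply a proof of the Corollary; it is stated as an immediate consequence of the Lemma (the Kaji--Terasoma/JLP push-forward formula). Your argument is the natural and correct way to fill this in: expand $\chi^N=p_1^N$ in the Schur basis via the Frobenius identity $p_1^N=\sum_{|\lambda|=N}f^{\lambda}s_\lambda$, use $s_\lambda(x_1,\dots,x_d)=0$ for $\ell(\lambda)>d$, push forward termwise with the Lemma, and reindex $\lambda=\mu+\varepsilon$.

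Your worry in the last paragraph is exactly the point that deserves care, and it is genuine rather than cosmetic. The sum in \eqref{japanese} must be read as restricted to $\ell(\lambda)\le d$; otherwise the formula is false already for $r=2$, $d=1$, $N=3$, where including $\lambda=(1,1)$ would give $c_1(E)^2+c_2(E)$ instead of the correct $c_1(E)^2-c_2(E)$. Your derivation produces precisely this restricted sum, and the paper's subsequent use confirms the same reading: the expression \eqref{g1} for $f^{\varepsilon+p(2)}$ carries the factor $d(d-1)$, hence vanishes for $d=1$ even though the honest tableau count $f^{(2,1)}=2$. So ``$f^{\lambda+\varepsilon}$'' in the Corollary is implicitly zero once $\ell(\lambda)>d$.

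The vanishing you flag --- that $\Delta_{\lambda-\varepsilon}(s(E))=0$ when $\lambda\not\supseteq\varepsilon$ --- is routine once one fixes the $d\times d$ determinantal convention: since $\lambda$ is a partition with $\ell(\lambda)\le d$, failure to contain $\varepsilon$ forces $\lambda_d<r-d$, and then every entry $s_{(\lambda-\varepsilon)_d+j-d}(E)$ in the bottom row has negative index and is zero. No partition $\mu$ with $\ell(\mu)>d$ enters, because such $\mu$ never arise as $\lambda-\varepsilon$ with $\ell(\lambda)\le d$.
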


{\bf Some notation and facts:}
\begin{itemize}\itemsep=-2pt \item  A natural number $k$ is regarded as a partition of length 1.  
For every $k$, $ \Delta_k(c) = c_k$, i.e., the degree $k$ term of $c$. In particular,
$$\Delta_k(c(E))=c_k(E), \qquad \Delta_k(s(E))=s_k(E).$$
\item Conjugate   partitions: given a partition $\lambda$, let $\bar\lambda$ be the conjugate partition, i.e., the partition which describes the conjugate Young tableau of
$\lambda$ (the one obtained by flipping it with respect to its diagonal.)
Then
$$\Delta_{\bar\lambda}(c(E)) = \Delta_\lambda(s(E)).$$
\item We shall denote by $p(k)$ the partition made up by $k$ 1's. Then $k$ and $p(k)$ are conjugate partitions, so that
$$\Delta_{p(k)}(s(E)) = c_k(E).$$
\item An explicit formula for $f^\lambda$ is the following \cite{Frob1900,MM1909}.
Let $\lambda$ be a partition of length $q>1$. Then 
\begin{equation} f^\lambda = \frac{\vert\lambda\vert !}{\prod_{i=1}^q\ell _i!} \prod_{1 \le i < j \le q} (\ell_i-\ell_j),\qquad \ell_i = \lambda_i + q - i 
\label{f}\end{equation}
\end{itemize}
We note that
\begin{equation}\label{g1}  f^{\varepsilon+1} = \frac{md}{r} f^\varepsilon, \qquad f^{\varepsilon+p(2)} = \frac{m(m+1)d(d-1)}{2r(r-1)}) f^\varepsilon \end{equation}
\begin{equation}\label{g2}   f^{\varepsilon+2} = \frac{m(m+1)d(d+1)}{2r(r+1)}  f^\varepsilon \end{equation}
where $m=d(r-d)+1$. So, setting $n=2$, Corollary \ref{chi} yields
\begin{equation}\label{f1} \pi _\ast \chi^{m-1}
= f^\varepsilon, \qquad \pi_\ast \chi^m=   \frac{md}{r} f^\varepsilon c_1(E)
\end{equation}
\begin{equation}\label{f2}  \pi_\ast\chi^{m+1}  = \frac{m(m+1)d(d+1)}{2r(r+1)}  f^\varepsilon  (c_1(E)^2-c_2(E)) +  \frac{m(m+1)d(d-1)}{2r(r-1)}) f^\varepsilon  c_2(E).\end{equation}
 
 \subsection{The result}\label{result}
 {In this Section we prove the main result of this paper.} 
 
{\begin{thm} Given a curve semistable Higgs bundle $\E=(E, \phi)$ on a surface $X$, if, for some $d$ in the range $0 < d < r=\rk E$, the Higgs Grassmannian $\grass{d}{\E}$ has an irreducible component $Z$ which is a divisor in $\thegrass$ and surjects onto $X$, then $\Delta(E)=0$.
\end{thm}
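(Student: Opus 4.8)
The plan is to move the whole question to the full Grassmann bundle $\pi_d\colon\thegrass\to X$ and to read off $\Delta(E)$ from a single intersection number against $Z$, playing the nefness of $\theta_d(\E)$ against the Kaji--Terakawa push-forward formulas of Corollary \ref{chi}. Write $\chi=c_1(Q_d)$ and introduce the class
\[
\bar\theta_d=\chi-\tfrac dr\,\pi_d^\ast c_1(E)\in N^1(\thegrass)\otimes\Q ,
\]
which is the obvious extension to $\thegrass$ of $\theta_d(\E)$, in that $\theta_d(\E)=\bar\theta_d\vert_{\grass{d}{\E}}$. Because $\E$ is curve semistable, $\theta_d(\E)$ is nef on $\grass{d}{\E}$, and restriction preserves nefness, so $\bar\theta_d\vert_Z$ is nef. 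Since $Z$ is a divisor in $\thegrass$ and $\dim\thegrass=d(r-d)+2=m+1$, we have $\dim Z=m$, whence
\[
\bar\theta_d^{\,m}\cdot[Z]=\int_Z\big(\bar\theta_d\vert_Z\big)^m\ge 0 .
\]

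The computational heart is a pair of push-forward identities. Expanding $\bar\theta_d=\chi-\tfrac dr\pi_d^\ast c_1(E)$ by the binomial theorem and keeping only the powers of $\pi_d^\ast c_1(E)$ of degree at most two (the rest die because $\dim X=2$), then inserting the formulas \eqref{f1} and \eqref{f2}, one finds after a short calculation in which the $c_1(E)^2$ terms cancel identically that
\[
\pi_{d\ast}\bar\theta_d^{\,m}=0,\qquad
\pi_{d\ast}\bar\theta_d^{\,m+1}=-\,\frac{m(m+1)\,d(r-d)}{r(r^2-1)}\,f^\varepsilon\,\Delta(E),
\]
where $m=d(r-d)+1$. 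The second identity is exactly what makes the argument work: the constant in front is strictly positive (as $0<d<r$ and $f^\varepsilon>0$), so $\pi_{d\ast}\bar\theta_d^{\,m+1}$ is a positive multiple of $-\Delta(E)$ and of nothing else.

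Next I would use that the Grassmann bundle has relative Picard number one, so that
\[
N^1(\thegrass)\otimes\Q=\Q\,\chi\oplus\pi_d^\ast\big(N^1(X)\otimes\Q\big),
\]
and write $[Z]=a\,\bar\theta_d+\pi_d^\ast\gamma$ with $a\in\Q$ and $\gamma\in N^1(X)\otimes\Q$ (only the numerical class of $[Z]$ matters for the degrees below). Intersecting with $\bar\theta_d^{\,m}$ and pushing down, the mixed term contributes $\gamma\cdot\pi_{d\ast}\bar\theta_d^{\,m}=0$ by the first identity, leaving
\[
0\le\bar\theta_d^{\,m}\cdot[Z]=a\int_{\thegrass}\bar\theta_d^{\,m+1}=-\,a\,\frac{m(m+1)\,d(r-d)}{r(r^2-1)}\,f^\varepsilon\,\Delta(E).
\]
Restricting $[Z]$ to a general fibre $\pi_d^{-1}(x)\cong\grass{d}{\C^r}$ identifies $a$ with the degree of the effective divisor $Z\cap\pi_d^{-1}(x)$ against the ample generator $\cO(1)$ of the rank-one Picard group of the Grassmannian fibre; since $Z$ dominates $X$, this fibre divisor is nonempty for general $x$, hence $a>0$. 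Therefore $\Delta(E)\le 0$. On the other hand a curve semistable Higgs bundle is semistable with respect to any polarization, so the Bogomolov-type inequality gives $\Delta(E)\cdot H^{n-2}\ge 0$, which on a surface reads $\Delta(E)\ge 0$. Combining the two, $\Delta(E)=0$.

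The step I expect to be most delicate is the positivity $a>0$: it relies on $Z$ being simultaneously effective and dominant over $X$, so that it truly meets the general Grassmannian fibre in a nonempty divisor whose class is a positive multiple of the ample $\cO(1)$; this is precisely where both hypotheses, ``divisor in $\thegrass$'' and ``surjects onto $X$'', are used. The remainder is bookkeeping, but the cancellation of the $c_1(E)^2$ terms that leaves only $\Delta(E)$ in $\pi_{d\ast}\bar\theta_d^{\,m+1}$ must be verified with care, since the whole conclusion rests on that one clean identity.
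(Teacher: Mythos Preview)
Your proof is correct and follows essentially the same strategy as the paper's: write $[Z]$ in the Leray--Hirsch decomposition of $A^1(\thegrass)$, compute $\bar\theta_d^{\,m}\cdot[Z]$ via the push-forward formulas \eqref{f1}--\eqref{f2}, show the coefficient of $\chi$ in $[Z]$ is positive from the surjectivity hypothesis, and finish with Bogomolov. Your choice to expand $[Z]$ in the basis $\{\bar\theta_d,\pi_d^\ast(\cdot)\}$ rather than $\{\chi,\pi_d^\ast(\cdot)\}$ is a nice touch, since the separate identity $\pi_{d\ast}\bar\theta_d^{\,m}=0$ then explains structurally what the paper calls the ``miraculous disappearance'' of $\beta_1$.
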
} 

\begin{proof}Using the Leray-Hirsch Theorem we {define the classes $\beta_i \in A^i(X)$ by}
 $$ [Z ] = \chi \, \pi^\ast \beta_0  + \pi^\ast\beta_1 \in A^1(\thegrass),$$ {where $[Z]$ is the class of $Z$ in $A^1(Gr_d(E))$ (for a version of the Leray-Hirsch Theorem for Chow groups which applies to the present case see \cite{Kris}).}
Recalling equation \eqref{thetas},
a rather lengthy computation yields\footnote{From the second line we omit to write the pullbacks.
 We use the fact that integration on $\thegrass$ is the push-forward to $X$ followed by integration on $X$. The last line follows from the insertion of equations 
\eqref{f1}, \eqref{f2}  and some fractional calculus gimmickry.}
\begin{equation}\begin{aligned}   \int_Z \theta_d(\E)^ m &=  \int_{\thegrass}\theta_d(\E)^ m \smile\, [Z] = 
 \int_{\thegrass} [\chi - \tfrac{d}{r}\pi^\ast c_1(E)]^m (\chi\, \pi^\ast \beta_0     + \pi^\ast\beta_1) \\[5pt]
&= \int_{\thegrass} \left[ \beta_0\chi^{m+1} + \left(\beta_1-\beta_0 \frac{md}{r }c_1(E)\right)\chi^{m}  \right. + \\
& 
\left. \parbox{40mm}{\hfill} 
 \left(-\frac{md}r\beta_1\,c_1(E)+ \frac{m(m-1)d^2}{2r^2}\beta_0\,c_1(E)^2\right)\chi^{m-1} \right]\\
&= - \frac{m(m+1)d(r-d)}{r(r+1)(r-1)} f^\varepsilon \,\beta_ 0 \, \Delta(E)
= - \frac{m(m+1)(m-1)}{r(r+1)(r-1)} f^\varepsilon \,\beta_ 0 \, \Delta(E) .\label{delta} \end{aligned}\end{equation}
In the last line, by integrating over $X$, we   think of $\beta_0$ and $\Delta(E)$ as   integers. $\beta_0$ is positive by the following argument.  Denote by $\pi_Z$ the restriction of $\pi_d$ to $Z$, and by $Z_x$ its fiber at a point $x \in X$. We also denote $F_x = \pi_d^{-1}(x)$. We assume that  $\pi_Z$ is surjective so that by   \cite[Lemma 29.28.2]{Stacks}  every irreducible component of its fibers has either dimension $d(r-d)$ or $d(r-d)-1$
(to apply that result we need $Z$ to be integral but we can achieve that by replacing it with its reduced subscheme if needed).
{On the other hand, since $\pi_Z$ is proper, by the semicontinuity of the fiber dimension (see e.g.~\cite[Lemma 37.29.5]{Stacks}) the locus in $X$ where the fiber $Z_x$ has dimension $d(r-d)-1$ is open.
It is nonempty because if all fibers $Z_x$ have dimension $d(r-d)$ then $\grass{d}{\E}$ would coincide with $\thegrass$, a situation which we may exclude. So the generic 
fiber of $\pi_Z$ has dimension $d(r-d)-1$, the ``expected dimension''. Hence for generic $x$, $Z_x$ determines a class in $A^1(F_x)$.} Since the restriction $\chi_x$ of $\chi$ to $F_x$ is ample,
 $$ 0 < [Z_x] \cdot \chi_x^{m-2} = \beta_0\, \chi_x^{m-1}, $$ 
 so that  $\beta_0 > 0$.
So  we get
 $$\Delta(E) \le 0 \, .$$Now by the Bogomolov inequality $\Delta(E) \ge 0$ we obtain $\Delta(E) =0$.
\end{proof}

Note the ``miraculous disappearance'' of $\beta_1$!

\section{The conjecture in rank two} 
\subsection{A non emptiness result} The following result is a key to our proof of the Conjecture in rank two.

\begin{thm}  \label{nonempty}
Let $X$ be a smooth  curve, which may be projective or affine, and let $\E=(E, \phi)$ be a rank 2 Higgs bundle on $X$. 
The Higgs Grassmannian $\grass1{\E}$ of rank one Higgs quotients of $\E$ is  not empty.
\end{thm}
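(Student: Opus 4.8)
The plan is to reduce the non-emptiness of $\grass1{\E}$ to a statement about the existence of a $\phi$-invariant line subbundle, or equivalently a rank one Higgs quotient, of $\E$. By the construction in Section \ref{HiggsGrass}, a point of $\grass1{\E}=\P\E$ over $x\in X$ corresponds to a rank one locally free Higgs quotient of the restriction of $\E$, and the universal property tells us that $\grass1{\E}$ is nonempty precisely when such a quotient exists after passing to a suitable subscheme. Since $\rk E=2$, a rank one quotient is the same as a rank one (saturated) Higgs subsheaf $L\hookrightarrow E$ with $\phi(L)\subseteq L\otimes\Omega^1_X$; so the heart of the matter is to produce a $\phi$-invariant line subbundle of $E$.

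First I would set up the local picture. Writing $\phi\colon E\to E\otimes\Omega^1_X$, the defining condition $b_1\circ\pi_1^\ast\phi\circ a_1=0$ cutting out $\grass1{\E}$ inside $\P E$ says exactly that the tautological line sub-bundle $S_1\subset\pi_1^\ast E$ is preserved by $\pi_1^\ast\phi$ modulo $S_1\otimes\Omega^1$, i.e.\ the induced map $S_1\to Q_1\otimes\Omega^1$ vanishes. Over a curve $\Omega^1_X$ is a line bundle, so on each fiber $\P E_x\cong\P^1$ this is a single scalar condition varying algebraically; the upshot is that $\grass1{\E}$ is the zero locus of a section of a line bundle on $\P E$, hence a divisor (possibly empty only if the section is nowhere zero). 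The strategy is to show this section must vanish somewhere.

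The key step I expect to carry the argument is an eigenvalue/characteristic-polynomial computation. Locally trivializing $\Omega^1_X$, the Higgs field becomes an $\cO_X$-linear endomorphism-valued object, essentially a $2\times2$ matrix $\phi$ with entries in $\Omega^1_X$; a $\phi$-invariant line corresponds to an eigenline. Because $E$ has rank $2$, the obstruction to finding an invariant line is governed by the discriminant of the characteristic polynomial of $\phi$, a section of $(\Omega^1_X)^{\otimes2}$, together with the splitting behaviour of its roots. I would argue that whether $X$ is affine or projective, one can always find an eigensection: if $X$ is affine the ring of functions is large enough to split things, and if $X$ is projective one uses that a line bundle of sufficiently negative degree maps into $E$ compatibly with $\phi$, which is possible by a Riemann–Roch / dimension count since rank one Higgs subsheaves of a given $\phi$-degree form a nonempty family once the degree is pushed low enough. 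Either way the existence of an invariant line is guaranteed.

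The main obstacle, and the point requiring the most care, will be handling the case where the characteristic polynomial of $\phi$ has no honest eigenline over the function field — that is, when the two eigenvalues are only defined on a double cover. In the genuinely matrix-diagonalizable or distinct-eigenvalue case the invariant line is immediate, but when $\phi$ is everywhere a nonzero nilpotent (or the spectral cover is irreducible) one must instead exhibit the invariant line as the image or kernel of $\phi$ itself. Concretely, $\Image\phi$ and $\Kernel\phi$, after saturation, give $\phi$-invariant subsheaves of rank one, and I would verify that at least one of them is a genuine line subbundle of generic rank one, using that $\phi\wedge\phi=0$ forces the image to be isotropic and hence at most rank one away from the zero locus of $\phi$. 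This case analysis — distinct spectral roots versus a single repeated root versus $\phi\equiv0$ (where any line works) — is where the rank $2$ hypothesis is truly used, and completing it cleanly is the delicate part of the proof.
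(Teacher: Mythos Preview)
Your proposal conflates two different statements: non-emptiness of $\grass1{\E}$, which only asks for a single closed point $y\in\P E$ over some $x\in X$ where the tautological line $S_{1,y}\subset E_x$ is $\phi_x$-invariant, and the existence of a \emph{global} $\phi$-invariant line sub-bundle $L\subset E$. You correctly observe that $\grass1{\E}$ is the zero locus of a section of a line bundle and that on each fibre this records eigenlines of $\phi_x$, but then you pivot to seeking a global eigensection. That global object need not exist. When the spectral cover is integral (two distinct eigenvalues that are only defined on the double cover), $\phi$ is generically invertible after twisting, so neither $\Kernel\phi$ nor $\Image\phi$ has rank one, and there is no $\phi$-invariant line sub-sheaf over $X$ at all: if $M\subset E$ were such a sub-sheaf, $\phi$ would act on $M$ by a global section of $\Omega^1_X$ whose square equals $\det\phi$, contradicting irreducibility of the spectral curve. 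Your affine claim fails too: on $X=\mathbb{A}^1$, $E=\cO_X^{\oplus 2}$, $\phi=\begin{pmatrix}0&1\\t&0\end{pmatrix}dt$, the eigenvalues are $\pm\sqrt t\notin\C(t)$ and one checks directly that no invariant rank one sub-module exists. So your case analysis has a genuine hole exactly in the case you flagged as delicate.

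The fix is to stay pointwise. The defining section factors through the line bundle $S_1^\vee\otimes Q_1\otimes\pi_1^\ast\Omega^1_X$, whose restriction to each fibre $\P E_x\simeq\P^1$ is $\cO(2)$; a section of $\cO(2)$ on $\P^1$ is either identically zero or has zeros, and in either case the fibre of $\grass1{\E}$ over $x$ is nonempty. Equivalently, any endomorphism of a $2$-dimensional complex vector space has an eigenvector. This already gives more than the theorem: $\grass1{\E}$ surjects onto $X$. The paper, by contrast, argues indirectly: assuming $\grass1{\E}=\varnothing$, the defining morphism tensored with $\operatorname{id}_{Q_1^\vee}$ becomes a nowhere-vanishing map $s\colon\Omega^1_{\P E/X}\to\Omega^1_{\P E}$, and the authors show the resulting short exact sequence forces the relative cotangent sequence $0\to\pi_1^\ast\Omega^1_X\to\Omega^1_{\P E}\to\Omega^1_{\P E/X}\to 0$ to split; a retraction of $s$ then splits the universal sequence $0\to S_1\to\pi_1^\ast E\to Q_1\to 0$, contradicting the non-splitting of the Euler sequence on each fibre. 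Their route is more structural (it ties emptiness to projective flatness of $E$), but for the bare non-emptiness statement the fibrewise eigenvector argument is both shorter and sufficient.
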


\begin{lemma}
Let $X$ be a smooth  curve (projective or affine), and let $\E=(E, \phi)$ be a rank 2 Higgs bundle on $X$.
Assume that the Higgs Grassmannian $\grass1{\E}$ of rank one Higgs quotients of $\E$ is  empty.
Then the Higgs field $\phi$ induces a splitting of the exact sequence
\begin{equation}\label{reldiff} 0 \to \pi^\ast_1  \Omega^1_X \to \Omega^1_{\P E}\to \Omega^1_{\P E/X}\to 0. \end{equation}
\end{lemma}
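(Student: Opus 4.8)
The plan is to specialize the construction of Section~\ref{HiggsGrass} to the case $r=\rk E=2$, $d=1$, where everything becomes a statement about line bundles. On $\P E=\grass{1}E$ the universal sequence $0\to S_1\to\pi_1^\ast E\to Q_1\to 0$ has $S_1$ and $Q_1$ of rank one, and by definition $\grass{1}{\E}$ is the zero scheme of the $\cO_{\P E}$-linear composition $\psi:=b_1\circ\pi_1^\ast\phi\circ a_1\colon S_1\to Q_1\otimes\Omega^1_{\P E}$. Since $\pi_1^\ast\phi$ takes values in $\pi_1^\ast\Omega^1_X\subset\Omega^1_{\P E}$, the map $\psi$ factors through $Q_1\otimes\pi_1^\ast\Omega^1_X$, so $\psi$ is a global section of $\cHom(S_1,Q_1)\otimes\pi_1^\ast\Omega^1_X$. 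The first step is then to record the tautological identification $\cHom(S_1,Q_1)\cong T_{\P E/X}$ (equivalently $\Omega^1_{\P E/X}\cong S_1\otimes Q_1^{-1}$), under which $\psi$ becomes a bundle homomorphism $\tau\colon\Omega^1_{\P E/X}\to\pi_1^\ast\Omega^1_X$, and $\grass{1}{\E}=\{\tau=0\}$.

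Next I would use the hypothesis. If $\grass{1}{\E}=\emptyset$ then $\psi$, hence $\tau$, is nowhere vanishing; being a homomorphism of line bundles, $\tau$ is therefore an isomorphism $\Omega^1_{\P E/X}\iso\pi_1^\ast\Omega^1_X$. The goal is to convert this isomorphism, which is manufactured by the Higgs field, into an actual splitting of $0\to\pi_1^\ast\Omega^1_X\to\Omega^1_{\P E}\to\Omega^1_{\P E/X}\to 0$. Concretely I would try to write down a section $s\colon\Omega^1_{\P E/X}\to\Omega^1_{\P E}$ of the projection, built from $\tau$ together with the tautological (relative) second fundamental form $\kappa\colon S_1\iso Q_1\otimes\Omega^1_{\P E/X}$ that realizes $\Omega^1_{\P E/X}=\cHom(Q_1,S_1)$; the comparison of $\psi$ with $\kappa$ is exactly what makes $\tau$ canonical, and I would arrange $s$ so that the induced map to $\Omega^1_{\P E/X}$ is the identity while the vertical correction term is globally defined precisely because $\psi$ is nowhere zero.

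The hard part, and the step I would spend the most care on, is passing from the isomorphism $\tau$ to a genuine global splitting: an isomorphism between the sub-bundle $\pi_1^\ast\Omega^1_X$ and the quotient $\Omega^1_{\P E/X}$ of a short exact sequence does not by itself split it. What has to be used is the geometric origin of $\tau$ in $\phi$, so that the relative Atiyah (extension) class in $\Ext^1(\Omega^1_{\P E/X},\pi_1^\ast\Omega^1_X)\cong H^1(\P E,\cHom(\Omega^1_{\P E/X},\pi_1^\ast\Omega^1_X))$ is actually killed; note that a nowhere-vanishing $\psi$ trivializes exactly the sheaf $\cHom(\Omega^1_{\P E/X},\pi_1^\ast\Omega^1_X)\cong T_{\P E/X}\otimes\pi_1^\ast\Omega^1_X$ in which this obstruction lives, which is what brings the Higgs field to bear. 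I expect this to be the crux, and I would keep in view the eventual payoff in Theorem~\ref{nonempty}: on a fibre of $\pi_1$ the bundle $\Omega^1_{\P E/X}$ restricts to $\Omega^1_{\P^1}=\cO(-2)$ while $\pi_1^\ast\Omega^1_X$ restricts to $\cO$, so the isomorphism $\tau$, and hence the splitting, cannot persist. It is this fibrewise incompatibility that the splitting is designed to expose, ultimately forcing $\grass{1}{\E}\neq\emptyset$.
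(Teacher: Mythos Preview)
As a proof of the Lemma your proposal has a genuine gap: you correctly build the map $\tau\colon\Omega^1_{\P E/X}\to\pi_1^\ast\Omega^1_X$ and correctly argue that it is an isomorphism when $\grass{1}{\E}=\emptyset$, but you then leave the passage from $\tau$ to a splitting of \eqref{reldiff} as ``the hard part'' with no argument. Your suggested attack via the extension class does not close the gap: trivializing the coefficient sheaf $\cHom(\Omega^1_{\P E/X},\pi_1^\ast\Omega^1_X)$ via $\tau$ only moves the obstruction into $H^1(\P E,\cO_{\P E})$, which has no reason to vanish. The paper proceeds differently. It does \emph{not} factor $\psi$ through $\pi_1^\ast\Omega^1_X$; instead it regards $s=\psi\otimes\operatorname{id}_{Q_1^\vee}$ as a map $\Omega^1_{\P E/X}\to\Omega^1_{\P E}$ into the full cotangent bundle. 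Under the emptiness hypothesis $s$ is a bundle injection with locally free rank-one cokernel $R$, and two separate fibre-degree arguments show that the composite $g\colon\pi_1^\ast\Omega^1_X\hookrightarrow\Omega^1_{\P E}\twoheadrightarrow R$ is an isomorphism; then $g^{-1}$ composed with the projection to $R$ is the desired retraction of the inclusion $\pi_1^\ast\Omega^1_X\hookrightarrow\Omega^1_{\P E}$.

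That said, your final paragraph already contains a cleaner route to the real target, Theorem~\ref{nonempty}, that does not pass through the Lemma at all. Because $\pi_1^\ast\phi$ lands in $\pi_1^\ast E\otimes\pi_1^\ast\Omega^1_X$, your map $\tau$ really is a morphism $\Omega^1_{\P E/X}\to\pi_1^\ast\Omega^1_X$, and on any fibre $F\cong\P^1$ it restricts to a map $\cO_F(-2)\to\cO_F$, i.e.\ a section of $\cO_F(2)$, which necessarily vanishes somewhere. Hence $\grass{1}{\E}$ meets every fibre of $\pi_1$ and in particular is nonempty. You present this fibrewise incompatibility as the ``eventual payoff'' to be reached \emph{via} the splitting, but it is available immediately from $\tau$ itself; the splitting is not needed, and consequently neither is the Lemma.
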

Note that since $\grass1{\E}$ is assumed to be  empty, the Higgs field $\phi$ is necessarily nonzero.
\begin{proof}
We refer to diagram \eqref{diagram}.
{Note that  $Q^\vee_1 \otimes S_1  \simeq  \Omega^1_{\P E/X}$. 
Define a map $s\colon  \Omega^1_{\P E/X} \to \Omega^1_{\P E} $ by letting
$$ s = ({b}_1 \circ \pi^\ast_1  \phi \circ {a}_1 ) \otimes \operatorname{id}_{Q_1 ^\vee}.$$
Since $\Omega^1_{\P E/X}$ is a line bundle, and $ \Omega^1_{\P E}$ is locally free, the morphism $s$ is either zero or is injective; but if it were zero, since the Higgs Grassmannian $\grass{1}{\E}$ is the zero locus of the composition $${b}_1  \circ \pi^\ast_1  \phi \circ {a}_1 =s \otimes \operatorname{id}_{Q_1 },$$ the Higgs Grassmannian $\grass{1}{\E}$ would be the entire $\P E$, and therefore would not be empty.
So we have an exact sequence
\begin{equation}\label{sost}
0 \to \Omega^1_{\P E/X} \xrightarrow{ s} \Omega^1_{\P E} \to R \to 0
\end{equation}where  $R$ is by definition the quotient, which has rank one. }

As the Higgs Grassmannian is empty, $s$ has no zeroes, so that $R$ is locally free. We form the diagram
\begin{equation}\label{diag2} \xymatrix{
0  \ar[r] &  \Omega^1_{\P E/X} \ar[r]^{ s}  & \Omega^1_{\P E}\ar@{=}[d] \ar[r]^r & R\ar[r] & 0 \\
0  \ar[r] & \pi^\ast_1  \Omega^1_X \ar[r]^{i}\ar@{..>}[u]_h \ar@{..>}[rru]^{\parbox{35mm}{\hfill}g} &  \Omega^1_{\P E}  \ar[r]^p& \Omega^1_{\P E/X} \ar[r] & 0 }
\end{equation}
{ We show that the morphism $g = r\circ i \colon \pi^\ast_1  \Omega^1_X \to R $ cannot be zero. Indeed if it were zero we would have a morphism $ h \colon \pi^\ast _1 \Omega^1_X \to  \Omega^1_{\P E/X}  $ which is not  zero as $i = s\circ h$. 
However since the fiber degree of   $\Omega^1_{\P E/X}$ is $-2$, the restriction of $h$ to each fiber of $\pi$ is zero,
i.e., $h=0$, which is a contradiction.}

{Thus, $g$ is nonzero, hence is injective. We prove it is an isomorphism. We have an exact sequence
$$ 0 \to \pi^\ast_1  \Omega^1_X \xrightarrow{g} R \to N \to 0 $$
where $N$ has rank zero. For any fiber $F$ of $\pi_1 $, by a standard argument, we have an exact sequence
$$ 0 \to \cO_F \to R_{\vert F} \to N_{\vert F} \to 0.$$
Since $R$ has fiber degree 0, $R_{\vert F}$ is isomorphic to  $\cO_F$, so that $N_{\vert F} =0$. As this holds for every fiber,
$N=0$, hence $g$ is an isomorphism.}

Now we have a diagram
$$\xymatrix{
& R \ar[d]_{g^{-1}} \\
0 \ar[r]  &  \pi^\ast \Omega^1_X  \ar[r]^i & \Omega^1_{\P E} \ar[ul]_r \ar[r] &  \Omega^1_{\P E/X} \ar[r] & 0 
}$$
which shows that the sequence \eqref{reldiff} splits.
\end{proof}

{\noindent \em Proof of Theorem \ref{nonempty}. } Note that the first line in diagram \eqref{diag2} splits as $i\circ g^{-1}$ is a section of $r$.
Let $t'$ be a retraction of the morphism $s$. Then $t= t'\otimes \operatorname{id}_Q$
is a morphism $Q_1 \otimes \Omega^1_{\P E/X} \to S_1$. Define $\zeta\colon \pi^\ast_1  E \to S_1 $ as 
$$ \zeta = t \circ {b}_1  \circ \pi^\ast_1  \phi.$$
Then 
$$  \zeta\circ {a}_1  =  t \circ {b}_1 \circ \pi^\ast_1 \phi \circ {a}_1  
= (t'\otimes \operatorname{id}_{Q _1} )\circ ( s \otimes \operatorname{id}_{Q^\vee_1}  )=  \operatorname{id}_{S_1 }$$
so that the first line in diagram \eqref{diagram} splits. But this is impossible as on each fiber of $\pi_1 $ that sequence
reduces to the Euler exact sequence. \qed

\begin{corol}\label{empty}
Let $\E=(E, \phi)$ be a rank two Higgs bundle on a smooth  $n$-dimensional projective variety $X$. The Higgs Grassmannian $\grass{1}{\E}$ has a component of dimension at least $n$ which surjects onto $X$.
\end{corol}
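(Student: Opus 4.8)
The plan is to reduce everything to the surjectivity of the projection $\rho_1\colon\grass1{\E}\to X$ and then to obtain the nonemptiness of the fibers from the integrability of the Higgs field. First I would record the formal part: since $\grass1{\E}$ is a closed subscheme of $\P E$, the morphism $\rho_1$ is proper, so both its image and the images of its finitely many irreducible components are closed in $X$. If $\rho_1$ is surjective, these finitely many closed sets cover the irreducible variety $X$, and hence one of them, the image of some component $Z$, must equal $X$. The restriction $Z\to X$ is then a surjective morphism of varieties, whence $\dim Z\ge\dim X=n$; this $Z$ is the component asserted by the statement.

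So the whole content is the surjectivity of $\rho_1$, i.e.\ the nonemptiness of every fiber. Unwinding diagram \eqref{diagram}, a closed point of the fiber over $x\in X$ is a line $S\subseteq E_x$ (the kernel of a rank one quotient of $E_x$) for which the composition $S\hookrightarrow E_x\xrightarrow{\phi_x}E_x\otimes\Omega^1_{X,x}\to(E_x/S)\otimes\Omega^1_{X,x}$ vanishes, equivalently for which $S$ is invariant under $\phi_x$. Fixing a basis $\omega_1,\dots,\omega_n$ of $\Omega^1_{X,x}$ and writing $\phi_x=\sum_i A_i\otimes\omega_i$ with $A_i\in\operatorname{End}(E_x)$, the fiber is exactly the set of lines in the two-dimensional space $E_x$ simultaneously invariant under $A_1,\dots,A_n$.

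The key observation is that integrability pins these operators down: the condition $\phi\wedge\phi=0$, evaluated at $x$, reads $[A_i,A_j]=0$ for all $i,j$. Commuting endomorphisms of the two-dimensional space $E_x$ always share an eigenline (if some $A_i$ is non-scalar its eigenlines are preserved by every operator commuting with it, and if all the $A_i$ are scalar then every line works), and this common eigenline is a point of the fiber. Hence $\rho_1$ is surjective, and the formal argument above concludes the proof.

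The hard part is precisely this last step, and it is where the difference with the curve case lives. Theorem \ref{nonempty} is the specialization $n=1$, in which there is a single operator $A_1$ and an eigenline exists for the trivial reason that every endomorphism of a complex vector space has one. In higher dimension one is tempted to cut down to a curve through $x$, but restricting $\E$ to a curve only retains the component of $\phi$ along that curve and therefore produces the (a priori larger) Higgs Grassmannian of the induced curve Higgs bundle; passing from an invariant line for one cotangent direction to a line invariant in all directions at once is exactly what the commutativity $[A_i,A_j]=0$ supplies. This is the step I expect to be the genuine obstacle, the rest being formal.
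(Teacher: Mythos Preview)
Your proof is correct and takes a genuinely different route from the paper's. The paper argues by contradiction: if no component of $\grass{1}{\E}$ surjects onto $X$, its image $Y$ is a proper closed subset, and one chooses a curve $C\not\subset Y$, removes $C\cap Y$ and the singular points to get an affine curve $C'$, and then invokes Theorem~\ref{nonempty} to get a contradiction from the claim that $\E_{|C'}$ has empty Higgs Grassmannian. You instead work pointwise: the fiber of $\rho_1$ over $x$ is the set of common eigenlines of the endomorphisms $A_1,\dots,A_n$ obtained by expanding $\phi_x$ in a basis of $\Omega^1_{X,x}$, and the integrability $\phi\wedge\phi=0$ forces $[A_i,A_j]=0$, so a common eigenline exists in the two-dimensional space $E_x$.

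What each approach buys: the paper's argument packages the work into the curve result Theorem~\ref{nonempty}, but---as you yourself flag in your final paragraph---the restriction $\E_{|C'}$ carries only the component of $\phi$ along $C'$, so $\grass{1}{\E_{|C'}}$ is a priori larger than the preimage of $C'$ in $\grass{1}{\E}$; the emptiness of the latter does not obviously yield emptiness of the former, and the paper's sentence ``Then $\E_{|C'}$ has an empty Higgs Grassmannian'' glosses over exactly this point. Your argument sidesteps this entirely by using the full Higgs field at each point and exploiting the commutation relations coming from integrability, which is more elementary and makes transparent why rank two is special (commuting operators on a two-dimensional space always share an eigenline). In effect your pointwise linear-algebra argument proves a sharper statement---every fiber of $\rho_1$ is nonempty---and does so without needing Theorem~\ref{nonempty} at all.
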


\begin{proof}
If $\grass{1}{\E}$ does not have such a component, let $Y$ be its image in $X$ (actually taking its reduced subscheme if it happens to be nonreduced),   let $C$ be a curve in $X$ not contained in $Y$, and let $C'$ be $C$ minus its intersection points with $Y$, and minus its possible singular points. Then $\E_{|C'}$ has an empty Higgs Grassmannian, a contradiction to Corollary \ref{nonempty}.
\end{proof}

\begin{rem}{The splitting of the exact sequence \eqref{reldiff}
means that $E$ is projectively flat,   i.e., $\P E$ comes from a  projective representation
$\pi_1(X) \to \operatorname{PGL}_2(\C)$ of the fundamental group of $X$. This agrees with a result in  \cite{PanZhangZhang}, whose authors, as a particular case of their equivalence of categories, prove that semistable Higgs bundles on a curve are  projectively flat. Note indeed that if the Higgs Grassmannian is empty, the Higgs bundle is stable.
}\end{rem}

\subsection{The proof}\label{conjrk2}We start with the case $\dim X = 2$, i.e., $X$ is smooth  projective surface. From Corollary \ref{empty} we get that $\grass{1}{\E}$ has a component $Z$ of dimension 2, and we are in the hypotheses of Section \ref{result}, so that $\Delta(E)=0$.

This can be extended to the higher dimensional case $\dim X=n$. Let $H$ be the class of an ample line bundle $L=\cO_X(D)$
and let $Y$ be the intersection of $n-2$ generic divisors in the linear system $\vert m D \vert$ for $m\gg 0$.    The {result for dimension 2} implies that $$\Delta (E)\cdot H^{n-2}=\tfrac1{m^{n-2}}\Delta(E_{|Y})=0.$$

So we have proved:
\begin{thm}\label{thm1}
Let $\E=(E, \phi)$ be a rank two Higgs bundle on an $n$-dimensional smooth projective variety. Then the following conditions are equivalent:
\begin{enumerate}\itemsep=-2pt
\item $\E$ is curve semistable;
\item $\E$ is semistable with respect to some polarization and the class $\theta_1(\E)$ is nef;
\item $\E$ is semistable with respect to a polarization $H$, and  $$\Delta (E)\cdot H^{n-2}=0.$$ 
\end{enumerate}
\end{thm}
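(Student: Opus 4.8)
The plan is to prove the equivalences by the cycle (i) $\Rightarrow$ (ii) $\Rightarrow$ (i) together with (i) $\Rightarrow$ (iii) and (iii) $\Rightarrow$ (i). The equivalence (i) $\Leftrightarrow$ (ii) is essentially formal in rank two: since $\rk E = 2$ the only index in the range $0 < d < r$ is $d = 1$, so the criterion of \cite{BHR} that curve semistability is equivalent to the nefness of \emph{all} the classes $\theta_d(\E)$ collapses to the single assertion that $\E$ is curve semistable if and only if $\theta_1(\E)$ is nef. Combined with the elementary fact, recalled in the introduction, that a curve semistable Higgs bundle is automatically semistable with respect to every polarization, this gives (i) $\Leftrightarrow$ (ii) at once; the semistability clause of (ii) is then implied by the nefness of $\theta_1(\E)$, and is retained only to mirror Miyaoka's theorem and Theorem~\ref{TeoBO}.

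The substance of the statement is the implication (i) $\Rightarrow$ (iii), which I would first settle for a surface ($n = 2$) and then propagate to all $n$ by restriction. Assume $\E$ is curve semistable on a surface $X$. Corollary~\ref{empty} furnishes an irreducible component $Z$ of $\grass1{\E}$ of dimension at least $2$ dominating $X$. As $\P E$ is irreducible of dimension $3$, there are only two possibilities: either $Z$ is a divisor (dimension exactly $2$), or $Z = \P E$ and hence $\grass1{\E} = \P E$. In the divisorial case we are precisely in the hypotheses of the theorem of Section~\ref{result}, whose push-forward computation forces $\Delta(E) = 0$. In the remaining case every line subbundle of $E$ is $\phi$-invariant, so Higgs semistability of $\E$ coincides with ordinary semistability of $E$; curve semistability of $\E$ then becomes curve semistability of the vector bundle $E$, and $\Delta(E) = 0$ follows from Theorem~\ref{TeoBO}. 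To reach arbitrary $n$, I would cut down to a smooth complete intersection surface $Y = D_1 \cap \dots \cap D_{n-2}$ with $D_i \in |mH|$ generic and $m \gg 0$. Curve semistability is inherited by $\E_{|Y}$, since any curve mapping to $Y$ also maps to $X$; the surface case yields $\Delta(E_{|Y}) = 0$, and from $\Delta(E_{|Y}) = m^{n-2}\,\Delta(E)\cdot H^{n-2}$ we conclude $\Delta(E)\cdot H^{n-2} = 0$. Since $\E$ is semistable with respect to $H$, this is exactly (iii).

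For the converse (iii) $\Rightarrow$ (i) I would use the two inputs already recorded after Theorem~\ref{TeoBO}: by Theorem~2 of \cite{simpson-local}, for $E$ semistable with respect to $H$ the vanishing $\Delta(E)\cdot H^{n-2} = 0$ forces $\Delta(E) = 0$, so (iii) is equivalent to condition (i) of Conjecture~\ref{TeoHiggs}; and the implication (i) $\Rightarrow$ (ii) of that conjecture, namely that a semistable Higgs bundle with $\Delta(E) = 0$ is curve semistable, is the result of \cite{BHR}. This closes the cycle. The main obstacle is the implication (i) $\Rightarrow$ (iii): it is the only point where new geometry is needed, and it hinges on two features special to rank two. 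First, the nonemptiness of $\grass1{\E}$ over every curve (Theorem~\ref{nonempty}) is what makes Corollary~\ref{empty} produce a component dominating $X$; second, inside the threefold $\P E$ such a dominating component of the expected dimension is automatically a divisor, which is exactly the hypothesis needed to invoke the push-forward formula of Section~\ref{result}. The degenerate case $\grass1{\E} = \P E$ must be isolated and disposed of through Theorem~\ref{TeoBO}, as above.
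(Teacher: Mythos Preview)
Your argument is correct and follows the same approach as the paper: (i)$\Leftrightarrow$(ii) via \cite{BHR}, then reduce (i)$\Rightarrow$(iii) to the surface case via restriction to a complete intersection surface and invoke Corollary~\ref{empty} together with the push-forward computation of Section~\ref{result}, and close the cycle using Simpson's theorem and the direction of the conjecture already established in \cite{BHR}. You are in fact slightly more careful than the paper in explicitly isolating the degenerate case $\grass{1}{\E}=\P E$ and disposing of it via Theorem~\ref{TeoBO}.
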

\begin{proof}
We know from \cite{BHR} that (i) and (ii) are equivalent. The implication (ii) $\Rightarrow$ (iii) was just proved. To prove (iii) $\Rightarrow$ (ii) one can easily adapt the proof of one of the directions of Theorem 1.3 in \cite{BHR}.
\end{proof}

Actually condition (iii) can be strengthened to $\Delta(E)=0$. To see that we need some preliminary results. In particular we shall
prove that rank 2 H-nflat Higgs bundles have vanishing Chern classes {(H-nflat Higgs bundles were defined in Definition \ref{moddef})}.

\begin{lemma}\label{coche}
If $\E=(E, \phi)$ is an H-nflat Higgs bundle of any rank over a smooth $n$-dimensional  projective variety $X$ and $\Delta(E) \cdot H^{n-2}=0$, then all Chern classes of $E$ vanish.
\end{lemma}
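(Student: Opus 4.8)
\emph{Proof proposal.} The plan is to reduce the statement to Simpson's nonabelian Hodge correspondence, after first exploiting H-nflatness to pin down $c_1(E)$ and to rewrite the discriminant hypothesis in terms of the second Chern character.

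First I would treat $c_1$. Since $\E$ is H-nflat, both $\E$ and $\E^\ast$ are H-nef, so by Definition \ref{moddef} the line bundles $\det E$ and $\det(E^\ast)=(\det E)^\vee$ are both nef. A line bundle that is nef together with its inverse is numerically trivial, because for every curve $C\subset X$ one gets simultaneously $\deg(\det E)_{|C}\ge 0$ and $\le 0$; hence $c_1(E)=0$ in $N^1(X)$. In particular $c_1(E)\cdot H^{n-1}=0$ and, crucially, $c_1(E)^2\cdot H^{n-2}=0$. This last vanishing is exactly the point where H-nflatness (and not mere semistability) is used.

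Next I would record semistability and convert the hypothesis. H-nef Higgs bundles are curve semistable (see \cite{bruzzo-grana-adv}), hence $\mu_H$-semistable with respect to every polarization $H$; as $\E$ is H-nflat it is in particular H-nef, and by the previous step its slope is $0$. Now, from $\Delta(E)=\tfrac1{2r}\,c_1(E)^2-\operatorname{ch}_2(E)$, intersecting with $H^{n-2}$ and using $c_1(E)^2\cdot H^{n-2}=0$, the hypothesis $\Delta(E)\cdot H^{n-2}=0$ becomes equivalent to $\operatorname{ch}_2(E)\cdot H^{n-2}=0$.

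Finally I would invoke Simpson. A Higgs bundle that is $\mu_H$-semistable with $c_1(E)\cdot H^{n-1}=0$ and $\operatorname{ch}_2(E)\cdot H^{n-2}=0$ underlies, up to $S$-equivalence, a flat bundle by Simpson's correspondence \cite{Simpson-unif}; equivalently, its polystable graded object corresponds to a semisimple representation of $\pi_1(X)$. A flat bundle carries a flat connection, so by Chern--Weil all of its real Chern classes vanish, and since the rational Chern classes inject into real cohomology they vanish as well. Because the total Chern class is multiplicative along the graded object, $c(E)=1$, i.e. all Chern classes of $E$ vanish. The hard part is this last step: one must use Simpson's correspondence in the form valid on a higher-dimensional base and check that replacing $\E$ by its $S$-equivalent polystable object does not change the Chern classes (which only depend on the associated graded); everything preceding it is elementary, the only subtlety being that the passage from $\Delta\cdot H^{n-2}$ to $\operatorname{ch}_2\cdot H^{n-2}$ relies essentially on $c_1(E)^2\cdot H^{n-2}=0$, supplied by Step 1.
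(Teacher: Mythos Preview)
Your argument mirrors the paper's: use H-nflatness to get $c_1(E)=0$ numerically, rewrite the hypothesis as $\operatorname{ch}_2(E)\cdot H^{n-2}=0$, then appeal to Simpson. The paper cites Theorem~2 of \cite{simpson-local}, phrased as a filtration of $\E$ by stable Higgs bundles with vanishing Chern classes; this is the Jordan--H\"older form of the same input you invoke via the correspondence with flat bundles, so the two arguments are essentially identical.

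One inaccuracy to fix: the claim ``H-nef Higgs bundles are curve semistable'' is false. Already for trivial Higgs field, $\cO\oplus\cO(1)$ on $\P^1$ is nef but not semistable, and H-nefness reduces to ordinary nefness in that case. What \cite{bruzzo-grana-adv} supplies, and what the paper actually uses, is that H-\emph{nflat} Higgs bundles are semistable with respect to every polarization. Since your $\E$ is H-nflat this is exactly what you need, and once you replace that sentence the rest of your argument goes through unchanged.
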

\begin{proof} One has 
$c_1(E)=0$ as $\det(E)$ is numerically flat and then the condition on the discriminant implies $\operatorname{ch}_2(E) \cdot H^{n-2}=0$. By Theorem 2 in \cite{simpson-local} $\E$ has a filtration in (stable) Higgs bundles with zero Chern classes, whence the claim follows
(note that H-nflat Higgs bundles are semistable with respect to any polarization, see \cite{bruzzo-grana-adv}).
\end{proof}

\begin{thm}\label{thm2}
If $\E=(E, \phi)$ is a rank two H-nflat Higgs bundle over a smooth  projective surface $X$ then all Chern classes of $E$ vanish.
\end{thm}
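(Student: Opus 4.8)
The plan is to deduce Theorem \ref{thm2} from Lemma \ref{coche}. On a surface $n=2$, so $H^{n-2}$ is the fundamental class and $\Delta(E)\cdot H^{n-2}=\Delta(E)\in H^4(X,\Q)\cong\Q$. Since H-nflatness of $\E$ is part of the hypothesis, the only remaining hypothesis of Lemma \ref{coche} that I must verify is $\Delta(E)=0$; once this is in place, Lemma \ref{coche} yields the vanishing of all Chern classes of $E$ directly.

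The key intermediate step is to show that $\E$ is curve semistable. Because $\E$ is H-nflat, both $\E$ and $\E^\ast$ are H-nef, so both $\det E$ and $\det E^\ast=(\det E)^{-1}$ are nef; hence $\det E$ is numerically flat and $c_1(E)=0$ in $N^1(X)\otimes\Q$. On the other hand, Definition \ref{moddef} forces each universal Higgs quotient $\gQ_d$ to be H-nef, so $\det\gQ_d$ is nef and $c_1(\gQ_d)$ is a nef class on $\grass{d}{\E}$. Restricting \eqref{thetas} to $\grass{d}{\E}$ and using $\rho_d^\ast c_1(E)=0$, I obtain $\theta_d(\E)=c_1(\gQ_d)$, which is therefore nef for every $d$ in the range $0<d<r$ (for $r=2$ only $d=1$ occurs). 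By the characterization proved in \cite{BHR} and recalled in Section \ref{HiggsGrass} (curve semistability is equivalent to the nefness of all the $\theta_d(\E)$), $\E$ is curve semistable.

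With curve semistability established, the rank two result finishes the argument. Since $\E$ is a curve semistable rank two Higgs bundle on the projective surface $X$, the implication (i)$\Rightarrow$(iii) of Theorem \ref{thm1}, taken with $n=2$, gives $\Delta(E)\cdot H^{n-2}=\Delta(E)=0$. As $\E$ is H-nflat and satisfies $\Delta(E)\cdot H^{n-2}=0$, Lemma \ref{coche} applies and all Chern classes of $E$ vanish.

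The step requiring the most care is the second paragraph, and specifically the identification $\theta_d(\E)=c_1(\gQ_d)$. This works precisely because H-nflatness, rather than mere H-nefness, forces $c_1(E)=0$ and thereby annihilates the potentially anti-nef term $-\tfrac dr\rho_d^\ast c_1(E)$ in \eqref{thetas}; without the numerical flatness of $\det E$ one could not conclude nefness of $\theta_d(\E)$ from nefness of $c_1(\gQ_d)$ alone. If one prefers to sidestep this computation, the implication ``H-nef $\Rightarrow$ curve semistable'' may instead be quoted directly from \cite{bruzzo-grana-adv}. Either way, the essential new input beyond Lemma \ref{coche} and Theorem \ref{thm1} is the passage from H-nflatness to curve semistability.
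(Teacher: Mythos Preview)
Your proof is correct and follows essentially the same route as the paper's: establish curve semistability of $\E$, invoke Theorem \ref{thm1} to get $\Delta(E)\cdot H^{n-2}=0$, then apply Lemma \ref{coche}. The only difference is that you spell out explicitly why H-nflatness forces all $\theta_d(\E)$ to be nef (via $c_1(E)=0$), whereas the paper asserts curve semistability more tersely; note that in your closing side remark the implication quotable from \cite{bruzzo-grana-adv} should read ``H-nflat $\Rightarrow$ curve semistable'' rather than ``H-nef $\Rightarrow$ curve semistable'' (H-nefness alone does not give curve semistability, as $\cO\oplus\cO(1)$ on a curve already shows).
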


\begin{proof}
Since H-nflat bundles are curve semistable   by Theorem \ref{thm1}, if $H$ is an ample class in $X$, we have $\Delta(E) \cdot H^{n-2}=0$. Then Lemma \ref{coche} implies the claim.
\end{proof}

We can now strengthen Theorem \ref{thm1} in the following form.

\begin{thm}\label{thm3}
Let $\E=(E, \phi)$ be a rank two Higgs bundle on an $n$-dimensional smooth projective variety. Then the following conditions are equivalent:
\begin{enumerate}\itemsep=-2pt
\item $\E$ is curve semistable;
\item $\E$ is semistable with respect to some polarization and the class $\theta_1(\E)$ is nef;
\item $\E$ is semistable with respect to some polarization and $\Delta (E)=0$.
\end{enumerate}
\end{thm}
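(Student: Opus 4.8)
The plan is to take the equivalence (i) $\Leftrightarrow$ (ii) and the implication (iii) $\Rightarrow$ (i) essentially for free from Theorem~\ref{thm1}, and to concentrate all the work on the genuinely new implication (i) $\Rightarrow$ (iii). Indeed, (i) $\Leftrightarrow$ (ii) is already contained in Theorem~\ref{thm1}; and if $\Delta(E)=0$ then a fortiori $\Delta(E)\cdot H^{n-2}=0$, so semistability together with $\Delta(E)=0$ is exactly condition (iii) of Theorem~\ref{thm1} and hence forces curve semistability, settling (iii) $\Rightarrow$ (i). Thus everything reduces to proving that a curve semistable rank two Higgs bundle $\E$ has $\Delta(E)=0$ as a class in $H^4(X,\Q)$, and not merely $\Delta(E)\cdot H^{n-2}=0$, which is all Theorem~\ref{thm1} yields. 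Recall also that curve semistability already gives semistability with respect to every polarization, so the first clause of (iii) is automatic.

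First I would record that $\Delta(E)$, the class $\theta_1(\E)$, and the property of curve semistability are all unchanged when $E$ is tensored by a line bundle, and that curve semistability is preserved under dualization. This lets me reduce to the normalized case $c_1(E)=0$. In that case I claim $\E$ is H-nflat: since $c_1(E)=0$ the line bundle $\det E$ is numerically trivial, hence nef, while $\theta_1(\E)=c_1(Q_1)$; by Theorem~\ref{thm1} curve semistability makes $\theta_1(\E)$ nef, so the rank one universal quotient $\gQ_1$ is H-nef and therefore $\E$ is H-nef by Definition~\ref{moddef}. Running the same argument for $\E^\ast$, which is again curve semistable with $c_1(E^\ast)=0$, shows $\E^\ast$ is H-nef as well, so $\E$ is H-nflat.

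Now Lemma~\ref{coche} does the decisive work: $\E$ is H-nflat and $\Delta(E)\cdot H^{n-2}=0$ by Theorem~\ref{thm1}, so \emph{all} Chern classes of $E$ vanish as classes, in particular $\Delta(E)=0$; by twist-invariance this transfers back to the original $\E$, completing (i) $\Rightarrow$ (iii). I want to stress that it is essential to route this through Lemma~\ref{coche} rather than through Theorem~\ref{thm2}: for $n\ge 3$ the number $\Delta(E)\cdot H^{n-2}$ does not determine the class $\Delta(E)$, and it is precisely the Simpson filtration by Higgs bundles with vanishing Chern classes, built into Lemma~\ref{coche}, that upgrades the numerical vanishing to the vanishing of the full cohomology class.

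The step I expect to be the main obstacle is the normalization $c_1(E)=0$, because H-nflatness is an integral condition (it asks the honest line bundles $\det E$ and $\det E^\ast$ to be nef), whereas the twist that kills $c_1(E)$ exists only rationally unless $c_1(E)$ is $2$-divisible in the N\'eron--Severi group. I would resolve this either by verifying that all the inputs and the output of Lemma~\ref{coche} -- semistability, the vanishing $\Delta(E)\cdot H^{n-2}=0$, and Simpson's filtration -- depend only on $\Q$-classes and are stable under rational twists, or, to remain within honest bundles, by passing to the adjoint Higgs bundle $\operatorname{End}_0(\E)$, which has $c_1=0$ on the nose and satisfies $c_2(\operatorname{End}_0 E)=4\Delta(E)$; the price of this second route is that one must then know that a curve semistable Higgs bundle with $c_1=0$ is H-nflat in arbitrary rank, and not just in rank two where Theorem~\ref{thm1} is available.
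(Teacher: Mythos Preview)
Your proposal is correct, and your ``second route'' via the adjoint bundle is exactly what the paper does: it passes to $\End(\E)$ (rather than $\End_0(\E)$, but this is immaterial since $\Delta(\End E)=4\Delta(E)$ either way), notes that $\End(\E)$ is curve semistable with $c_1=0$, and then applies Lemma~\ref{coche}. The input you were uncertain about---that a curve semistable Higgs bundle with $c_1=0$ is H-nflat in \emph{any} rank---is indeed available and is simply cited from \cite{bruzzo-grana-adv}; so the ``price'' you identify for this route is already paid in the literature.

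Your primary route through a rational twist to kill $c_1(E)$ is a genuine alternative and would also work, but the divisibility obstacle you flag is real: H-nflatness and Simpson's filtration are statements about honest bundles, so one would have to pass to a finite cover (or argue carefully that every step is insensitive to $\Q$-twists). The paper's choice of $\End(\E)$ sidesteps this entirely, at the cost of invoking the higher-rank H-nflatness criterion; your twist approach would keep everything in rank two but require the cover argument. Both are short once the right external input is named, and the decisive content in either case is Lemma~\ref{coche}, exactly as you emphasize.
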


\begin{proof}
We only need to show that if (ii) holds  then $\Delta(E)=0$. If $H$ is an ample class in $X$, from Theorem \ref{thm1} we have $\Delta(E) \cdot H^{n-2}=0$, which is equivalent to $\Delta(\End(E)) \cdot H^{n-2}=0$. So the Higgs bundle $\End(\E)$ is curve semistable, and since $c_1(\End(E))=0$, the Higgs bundle $\End(\E)$ is H-nflat \cite{bruzzo-grana-adv}. By Lemma \ref{coche} $\Delta(\End(E))=0$, that is, $\Delta(E)=0$.
\end{proof}

\label{conjanyrank}
Actually we have a stronger result.
\begin{thm}
Let $\E=(E, \phi)$ be a Higgs bundle on an $n$-dimensional smooth projective variety, such that the Higgs Grassmannian $\grass{d}{\E}$ has an irreducible component $Z$ which is a divisor in $\thegrass$ that surjects onto $X$. Then the following conditions are equivalent:
\begin{enumerate}\itemsep=-2pt
\item $\E$ is curve semistable;
\item  $\E$ is semistable   with respect to some polarization and  the class $\theta_d(\E)$ is nef;
\item $\E$ is semistable  with respect to some polarization and $\Delta (E)=0$.
\end{enumerate}
\end{thm}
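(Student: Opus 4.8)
The plan is to establish the cycle of implications (i) $\Rightarrow$ (ii) $\Rightarrow$ (iii) $\Rightarrow$ (i); two of the three arrows follow at once from results already in hand, so the whole weight of the argument falls on (ii) $\Rightarrow$ (iii). For (i) $\Rightarrow$ (ii), if $\E$ is curve semistable then by \cite{BHR} all the classes $\theta_k(\E)$ are nef, in particular $\theta_d(\E)$, and a curve semistable Higgs bundle is semistable with respect to every polarization. For (iii) $\Rightarrow$ (i), if $\E$ is semistable with respect to some polarization and $\Delta(E)=0$, then $\E$ is curve semistable by the implication (i) $\Rightarrow$ (ii) of Conjecture \ref{TeoHiggs}, which was proved in \cite{BHR}. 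Thus the task reduces to showing that (ii) forces $\Delta(E)=0$.

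The first half of (ii) $\Rightarrow$ (iii) is a reduction to the surface result of Section \ref{result}. Writing $H=c_1(\cO_X(D))$, let $Y$ be the intersection of $n-2$ general members of $|mD|$ with $m\gg 0$, a smooth projective surface, and let $Z_Y$ be a component of $Z\cap\pi_d^{-1}(Y)$ dominating $Y$. Because $Z$ surjects onto $X$, its restriction over $Y$ is automatically surjective onto $Y$; and since $Z$ is an integral divisor of generic fibre dimension $d(r-d)-1$ over $X$, for general $Y$ the cycle $Z_Y$ is again an integral, codimension-one subscheme of the Grassmann bundle $\grass{d}{E_{|Y}}=\pi_d^{-1}(Y)$, surjecting onto $Y$ with the expected generic fibre dimension $d(r-d)-1$. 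Moreover $\theta_d(\E)$ is nef on $Z$ by (ii), hence nef on $Z_Y$. The computation of the theorem in Section \ref{result} now applies to the divisor $Z_Y$: its proof uses curve semistability only to guarantee $\int_{Z_Y}\theta_d(\E)^m\ge 0$, which here follows from nefness of $\theta_d(\E)$ and effectivity of $Z_Y$, and it yields $\int_{Z_Y}\theta_d(\E)^m = -c\,\beta_0\,\Delta(E_{|Y})$ with $c>0$ and $\beta_0>0$, whence $\Delta(E_{|Y})\le 0$, i.e.\ $\Delta(E)\cdot H^{n-2}\le 0$. On the other hand $\E$ is $H$-semistable, so the Bogomolov inequality of \cite{Simpson-unif} gives $\Delta(E)\cdot H^{n-2}\ge 0$, and therefore $\Delta(E)\cdot H^{n-2}=0$.

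The second half upgrades this to $\Delta(E)=0$ through the endomorphism bundle, exactly as in Theorem \ref{thm3}. The Higgs bundle $\End\E=\E\otimes\E^\ast$ is $H$-semistable, because tensor products and duals of $H$-semistable Higgs bundles are $H$-semistable; it has $c_1(\End E)=0$ and satisfies $\Delta(\End E)\cdot H^{n-2}=2r\,\Delta(E)\cdot H^{n-2}=0$. By the characterization of numerically flat Higgs bundles in \cite{bruzzo-grana-adv} (resting on \cite{simpson-local}), an $H$-semistable Higgs bundle with vanishing first Chern class and $\Delta\cdot H^{n-2}=0$ is H-nflat, so $\End\E$ is H-nflat; Lemma \ref{coche} then forces all Chern classes of $\End E$ to vanish, in particular $\Delta(\End E)=0$, whence $\Delta(E)=0$.

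The step I expect to be the main obstacle is the surface reduction, and specifically the control of the restricted divisor $Z_Y$: one must ensure, by a Bertini/general-position argument for the preimage $\pi_d^{-1}(Y)$ of a general complete intersection, that $Z_Y$ remains a reduced, pure codimension-one subscheme of $\grass{d}{E_{|Y}}$ dominating $Y$ with the expected generic fibre dimension $d(r-d)-1$, so that the positivity $\beta_0>0$ continues to hold. The transfer of nefness (from $Z$ to $Z_Y$) and the $H$-semistability input are routine, and the endomorphism-bundle upgrade is entirely parallel to the rank-two case; it is only the persistence of a divisorial, surjective component of the Higgs Grassmannian under restriction that needs genuine care.
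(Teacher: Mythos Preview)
Your proposal is correct and follows essentially the same route the paper has in mind; since the paper states this theorem without writing out a proof (it is presented as a direct strengthening of Theorem \ref{thm3}), you have in effect supplied the argument the paper leaves implicit. In particular you correctly observe that the computation of Section \ref{result} uses curve semistability only through the inequality $\int_Z\theta_d(\E)^m\ge 0$, which already follows from the nefness of $\theta_d(\E)$ assumed in (ii); this is exactly the point the paper flags in the remark following the theorem. Your reduction to a general complete-intersection surface, the Bogomolov inequality on $X$, and the passage to $\End(\E)$ via Lemma \ref{coche} all mirror the proofs of Theorems \ref{thm1} and \ref{thm3}. The one place requiring care, as you note, is showing that for general $Y$ the intersection $Z\cap\pi_d^{-1}(Y)$ still has an integral divisorial component surjecting onto $Y$; this is a standard Bertini argument applied to the base-point-free system $\pi_d^\ast\lvert mD\rvert$ on $\thegrass$, together with the surjectivity of $Z\to X$, so there is no genuine obstacle.
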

Note that, contrary to the general situation, and due to the assumption that $\grass{d}{\E}$ has a divisorial component that surjects
onto $X$, the nefness of the class $\theta_d(\E)$ is enough to have curve semistability (but assuming a priori that   $\E$ is semistable).

\section{Inequalities for H-nef Higgs bundles} \label{inequalities}

We know from \cite{FulLaz,DPS} that a nef vector bundle $E$ on an $n$-dimensional smooth projective variety satisfies, for every ample class $H$ and for every $1 \le k \le n$,  the inequalities $$  s_k(E) \cdot H^{n-k} \ge 0,$$where $s_k(E)$ are the Segre classes of $E$. In this Section we prove a version of these inequalities for H-nef Higgs bundles.
\begin{thm}
Let $\E=(E, \phi)$ be a rank $r$ H-nef Higgs bundle over an $n$-dimensional smooth projective variety. Assume that $Z$ is an irreducible component of the Higgs Grassmannian $\grass{1}{\E}$,   
and write its   class in $A^{N}(\P E)$, where $N$ is the codimension of $Z$ in $\P E$, as $$[Z] = \sum_{i=0}^N \pi_1^\ast \beta_i \cdot \xi^{N-i},$$where $\beta_i \in A^{i}(X)$. Then for every $1 \le k \le n$ we have the inequality
\begin{equation}\label{Segre}\sum_{i=0}^{r-1} \beta_i \cdot s_{k-i}(E) \cdot H^{n-k}\ge 0
\end{equation}for any ample class $H$ in $X$. 
\end{thm}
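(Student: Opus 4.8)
The plan is to deduce \eqref{Segre} from the elementary fact that a product of nef divisor classes on a projective variety has nonnegative degree, applied to the component $Z$ of the Higgs Grassmannian. The one geometric input that lets the Higgs field enter is that, for an H-nef Higgs bundle, the relative hyperplane class $\xi=c_1(\cO_{\P E}(1))=c_1(Q_1)$ becomes nef once restricted to $\grass{1}{\E}$, and hence to $Z$.

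First I would record this nefness. By Definition \ref{moddef}, since $\rk E\ge 2$, H-nefness of $\E$ forces the universal Higgs quotient $\gQ_1$ on $\grass{1}{\E}$ to be H-nef; being of rank one, this means its underlying line bundle is nef in the usual sense. As $c_1(\gQ_1)$ is exactly the restriction of $\xi$ to $\grass{1}{\E}$, the class $\xi|_Z$ is nef, and $\pi_1^\ast H$ is nef as well since $H$ is ample.

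Second, put $a=\dim Z-(n-k)=r-1+k-N$. If $a<0$ then $\dim Z<n-k$ and both sides of \eqref{Segre} vanish for dimension reasons, so I may assume $a\ge 0$. Since $\xi|_Z$ and $\pi_1^\ast H|_Z$ are nef and $Z$ is projective of dimension $a+(n-k)$, Kleiman's theorem gives
\[
\int_Z \xi^{\,a}\cdot \pi_1^\ast H^{\,n-k}\ \ge\ 0 .
\]
It then remains to evaluate this number. Using $\int_Z(\,\cdot\,)=\int_{\P E}(\,\cdot\,)\smile[Z]$, the decomposition $[Z]=\sum_i\pi_1^\ast\beta_i\,\xi^{N-i}$, the projection formula, and the substitution $a+N-i=r-1+k-i$, I obtain
\[
\int_Z \xi^{\,a}\cdot\pi_1^\ast H^{\,n-k}
=\sum_i \int_{\P E}\xi^{\,r-1+k-i}\cdot\pi_1^\ast\!\bigl(\beta_i\cdot H^{\,n-k}\bigr)
=\sum_i \beta_i\cdot \pi_{1\ast}\bigl(\xi^{\,r-1+k-i}\bigr)\cdot H^{\,n-k}.
\]
The Segre pushforward $\pi_{1\ast}(\xi^{\,r-1+j})=s_j(E)$ (the case $d=1$ of Corollary \ref{chi}, together with $\pi_{1\ast}\xi^{\,m}=0$ for $m<r-1$, i.e. $s_j(E)=0$ for $j<0$) converts this into $\sum_i\beta_i\cdot s_{k-i}(E)\cdot H^{\,n-k}$, in which only the indices $0\le i\le r-1$ contribute (the others either fall outside the Leray--Hirsch range or have $s_{k-i}(E)=0$); this is exactly the left-hand side of \eqref{Segre}. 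Comparing the two displays proves the inequality.

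The hard part is genuinely the first step: everything hinges on the nefness of $\xi|_Z$, which is where the hypothesis ``H-nef'' (as opposed to mere curve semistability) is essential, since $\xi$ itself need not be nef on all of $\P E$. The remaining difficulties are pure bookkeeping: confirming the surviving index range $0\le i\le r-1$ against the Leray--Hirsch normalization, and tracking the sign conventions in $\pi_{1\ast}\xi^{\,r-1+j}=s_j(E)$. As a sanity check, when $\phi=0$ and $E$ is an ordinary nef bundle one has $Z=\grass{1}{\E}=\P E$, so $N=0$ and $\beta_0=1$, and \eqref{Segre} reduces to the Fulton--Lazarsfeld bound $s_k(E)\cdot H^{\,n-k}\ge 0$.
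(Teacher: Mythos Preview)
Your proof is correct and follows essentially the same approach as the paper: use H-nefness to conclude that $\xi|_Z$ is nef, intersect $[Z]$ with $\xi^{\,r-1+k-N}\cdot\pi_1^\ast H^{\,n-k}$ on $Z$, and push forward via the Segre identity $\pi_{1\ast}\xi^{\,r-1+j}=s_j(E)$. Your treatment is a bit more explicit (you unwind Definition~\ref{moddef} to justify nefness of $\xi|_Z$ and address the case $a<0$, which in fact cannot occur since $N\le r-1$ and $k\ge 1$), but the argument is otherwise identical, including the sanity check with $\phi=0$ that the paper records as a remark.
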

 
\begin{proof}
We first prove the inequality for $k=n$. Since $\E$ is H-nef, the hyperplane class $\xi$ of $\P E$ is nef on $Z$. We shall use the identity \cite[\S3.1]{Ful98}
\begin{equation}\label{segre} \pi_{1\ast}\xi^{r+i-1}=s_i(E)  \end{equation} for $0 \le i \le n= \dim X$. Now we have 
\begin{eqnarray}
0 \le  \xi^{r+n-1-N} \frown[Z]&=&\int_{\P E}\xi^{r+n-1-N}\smile \sum_{i=0}^N \pi_1^\ast \beta_i \cdot \xi^{N-i}\\
&=&\sum_{i=0}^N \int_{\P E} \xi^{r+n-i-1}\smile \pi_1^\ast \beta_i
=\sum_{i=0}^{r-1}\beta_i \cdot s_{n-i}(E) 
\end{eqnarray}
For $k<n$ we have
\begin{eqnarray}
0 \le  (\pi_1^\ast H^{n-k} \smile \xi^{r+k-1-N}) \frown[Z]&=&  \sum_{i=0}^{r-1}\beta_i \cdot s_{k-i}(E) \cdot H^{n-k} . \end{eqnarray}
\end{proof}
Since $1\le N \le r-1$, the last summation in both equations may contain terms with $i>N$ but these are zero as $\beta_i=0$ in that range. 

\begin{rem} In the non-Higgs case we have $\beta_0=1$, $\beta_i=0$ for $i>0$ and we recover
the identities of \cite{FulLaz,DPS} when the Schur polynomial is a Segre class.
\end{rem}

 \par\bigskip\frenchspacing

\end{document}